\theoremstyle{plain}
   \newtheorem{theorem}{Theorem}[section]
   \newtheorem{proposition}[theorem]{Proposition}
   \newtheorem{lemma}[theorem]{Lemma}
   \newtheorem{corollary}[theorem]{Corollary}
   \newtheorem{conjecture}[theorem]{Conjecture}
   \newtheorem*{theorem*}{Theorem}
\theoremstyle{definition}
   \newtheorem{definition}[theorem]{Definition}
   \newtheorem{question}[theorem]{Question}
   \newtheorem{remark}[theorem]{Remark}
\numberwithin{equation}{section}
\newcommand\qbin[3]{\left[\begin{matrix} #1 \\ #2 \end{matrix} \right]_{#3}}
\newcommand\hook[2]{{\left( #2-#1, 1^{#1} \right)}}
\newcommand\Symm{\mathfrak{S}}
\newcommand\defining{\operatorname{def}}
\newcommand\one{\mathbf{1}}
\newcommand\Hom{\operatorname{Hom}}
\newcommand\Ind{\operatorname{Ind}}
\newcommand\Par{\operatorname{Par}}
\newcommand\Irr{\operatorname{Irr}}
\newcommand\Tr{\operatorname{Tr}}
\newcommand\wt{\operatorname{wt}}
\newcommand\maj{\operatorname{maj}}
\newcommand\sgn{\operatorname{sgn}}
\newcommand\normchi{\widetilde{\chi}}
\newcommand\Class{\operatorname{Cl}}
\newcommand\Nref{\operatorname{N^{\operatorname{ref}}}}
\newcommand\Nhyp{\operatorname{N^{\operatorname{hyp}}}}
\newcommand\hcprod{{*}}
\newcommand\OOO{{\mathcal{O}}}
\newcommand\CCCC{{\mathcal{C}}}
\newcommand\CCC{{\operatorname{Cusp}}}
\newcommand\llambda{{\underline{\lambda}}}
\newcommand\CC{{\mathbb{C}}}
\newcommand\ZZ{{\mathbb{Z}}}
\newcommand\QQ{{\mathbb{Q}}}
\newcommand\FF{{\mathbb{F}}}
\newcommand\HH{{\mathbb{H}}}
\begin{document}

\title[Reflection factorizations of Singer cycles]
{Reflection factorizations of Singer cycles}

\author{J.B. Lewis}
\author{V. Reiner}
\author{D. Stanton}
\email{(jblewis,reiner,stanton)@math.umn.edu}
\address{School of Mathematics\\
University of Minnesota\\
Minneapolis, MN 55455, USA}

\thanks{Work partially supported by NSF grants DMS-1148634 and
DMS-1001933.}

\keywords{Singer cycle, Coxeter torus, anisotropic maximal torus, reflection, transvection,
factorization, finite general linear group, regular element, q-analogue, higher genus, Hurwitz
orbit, Coxeter element, noncrossing, absolute length}

\begin{abstract}
The number of shortest factorizations into reflections for a 
Singer cycle in $GL_n(\FF_q)$ is shown to be $(q^n-1)^{n-1}$.  
Formulas counting factorizations of any length, and counting those with reflections of fixed
conjugacy classes are also given.  
The method is a standard character-theory technique, requiring the compilation of irreducible character values for Singer cycles, semisimple reflections, and transvections.   The results suggest several open problems and questions, which are discussed at the end.
\end{abstract}


\maketitle


\section{Introduction and main result}

This paper was motivated by two classic results on the number
$t(n,\ell)$ of ordered factorizations $(t_1,\ldots,t_\ell)$
 of an $n$-cycle $c=t_1 t_2 \cdots t_\ell$ in the symmetric group $\Symm_n$, 
where each $t_i$ is a transposition.
\vskip.1in
\noindent
\begin{theorem*}[{Hurwitz \cite{Hurwitz}, D\'{e}nes \cite{Denes}}]
For $n \geq 1$, one has
$
\label{Denes-theorem}
t(n,n-1)=n^{n-2}.
$
\end{theorem*}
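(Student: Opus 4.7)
The plan is to use Frobenius's character-theoretic formula for counting factorizations in a finite group, which expresses the number of ordered factorizations of $c$ into elements of a union of conjugacy classes $C$ as
\[
\frac{|C|^\ell}{|\Symm_n|} \sum_{\lambda \vdash n} \frac{\chi^\lambda(C)^\ell}{\chi^\lambda(1)^{\ell-1}} \chi^\lambda(c),
\]
where $C$ here is the conjugacy class of transpositions and $\ell = n-1$. This matches the method the authors advertise in the abstract, and the classical symmetric group case serves as the motivating template for their $GL_n(\FF_q)$ result.

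The first simplification is that $\chi^\lambda(c)$ vanishes unless $\lambda$ is a hook $\hook{k}{n}=(n-k,1^k)$, by the Murnaghan--Nakayama rule applied to the single $n$-ribbon, and then $\chi^\lambda(c) = (-1)^k$. For such hooks one has the dimension $\chi^\lambda(1) = \binom{n-1}{k}$, and from the content-sum formula for normalized characters on transpositions one computes
\[
\chi^\lambda(\text{transposition}) = \binom{n-1}{k} \cdot \frac{n-1-2k}{n-1}.
\]
So only $n$ terms survive in the Frobenius sum, one per hook shape.

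Substituting these values, the factor $\chi^\lambda(\text{trans})^{n-1}/\chi^\lambda(1)^{n-2}$ simplifies to $\binom{n-1}{k}(n-1-2k)^{n-1}/(n-1)^{n-1}$. After pulling the common constants $\binom{n}{2}^{n-1} = n^{n-1}(n-1)^{n-1}/2^{n-1}$ through the prefactor $|C|^{n-1}/n!$, the formula collapses to
\[
t(n,n-1) = \frac{n^{n-1}}{2^{n-1}\, n!} \sum_{k=0}^{n-1} (-1)^k \binom{n-1}{k} (n-1-2k)^{n-1}.
\]
The last step is to evaluate this alternating sum using the classical finite-difference identity $\sum_{k=0}^{m}(-1)^k \binom{m}{k}(m-2k)^m = 2^m\, m!$, which is immediate from $\Delta^m x^m = m!$ applied with step size $2$. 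Plugging in $m=n-1$ gives $2^{n-1}(n-1)!$, and cancellation yields $n^{n-2}$.

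The main obstacle is really bookkeeping rather than deep content: one must correctly identify the hook-support of $\chi^\lambda(c)$, correctly compute $\chi^\lambda$ on a transposition via contents, and recognize the resulting alternating sum as the top finite difference of $x^{n-1}$. The $q$-analog in the body of the paper will follow the same four beats (Frobenius formula, hook support for Singer cycles, normalized character evaluations on the reflection class, a $q$-identity to close out), and so this calculation serves as both proof and blueprint.
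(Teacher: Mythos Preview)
Your proof is correct. The paper does not actually prove this classical statement; it is quoted with attribution to Hurwitz and D\'enes as motivation, and Jackson's more general formula \eqref{Jackson-difference-formula} is likewise only cited. Your computation is precisely the specialization $\ell=n-1$ of Jackson's formula, derived via the Frobenius character method of Proposition~\ref{conj-count-prop}: the Murnaghan--Nakayama hook support, the content-sum evaluation on transpositions, and the closing identity $[\Delta^{n-1}(x^{n-1})]=(n-1)!$ are exactly the $q=1$ shadows of Proposition~\ref{Singer-cycle-character-values}, Lemma~\ref{normalized-characters-on-semisimple-reflections}, and \eqref{q-difference-on-powers} respectively. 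So while there is no proof in the paper to compare against, your argument is sound and is indeed the blueprint the paper follows for $GL_n(\FF_q)$.
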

\begin{theorem*}[{Jackson \cite[p.\ 368]{Jackson}}]
For $n \geq 1$, more generally $t(n,\ell)$ has ordinary generating function
\begin{equation}
\label{Jackson-ordinary-gf-product}
\sum_{\ell \geq 0} t(n,\ell) x^\ell
=  n^{n-2} x^{n-1} \prod_{k=0}^{n-1}  \left( 1 - x n\left(\frac{n-1}{2}-k\right) \right)^{-1} 
\end{equation}
and explicit formulas
\begin{align}
\label{Jackson-difference-formula}
t(n,\ell) 
=\frac{n^\ell}{n!} \sum_{k=0}^{n-1}(-1)^k \binom{n-1}{k} 
   \left( \frac{n-1}{2}-k \right)^\ell 
=\frac{(-n)^\ell}{n!}(-1)^{n-1} \left[ \Delta^{n-1}(x^\ell)\right]_{x=\frac{1-n}{2}}.
\end{align}
\end{theorem*}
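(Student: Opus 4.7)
The plan is to apply Frobenius's character-theoretic formula for factorizations in a finite group, then exploit the very restricted set of irreducible characters of $\Symm_n$ that are nonvanishing on an $n$-cycle.

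More concretely, writing $C$ for the class of transpositions and $c$ for an $n$-cycle, Frobenius's formula yields
\begin{equation*}
t(n,\ell) \;=\; \frac{|C|^\ell}{n!} \sum_{\lambda \vdash n} \frac{\chi^\lambda(C)^\ell \, \chi^\lambda(c)}{\chi^\lambda(1)^{\ell-1}}.
\end{equation*}
By the Murnaghan--Nakayama rule, $\chi^\lambda(c) = 0$ unless $\lambda$ is a hook $\hook{k}{n}$, in which case $\chi^\lambda(c) = (-1)^k$ and $\chi^\lambda(1) = \binom{n-1}{k}$. So the partition sum collapses to $n$ terms indexed by $k = 0, 1, \ldots, n-1$.

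Next I would evaluate the character value on a transposition. The standard content formula $\chi^\lambda(t)/\chi^\lambda(1) = \binom{n}{2}^{-1} \sum_{c \in \lambda} \operatorname{content}(c)$, combined with the observation that the cells of $\hook{k}{n}$ carry contents $\{0,1,\ldots,n-k-1\} \cup \{-1,\ldots,-k\}$ summing to $\binom{n-k}{2} - \binom{k+1}{2} = n\left(\tfrac{n-1}{2}-k\right)$, yields
\begin{equation*}
\chi^{(n-k,1^k)}(t) \;=\; \frac{2}{n-1}\left(\tfrac{n-1}{2}-k\right)\binom{n-1}{k}.
\end{equation*}
Substituting back, the binomial coefficients cancel between the $\ell$ numerator factors and the $\ell-1$ denominator factors, and the prefactor $|C|^\ell \cdot \bigl(\tfrac{2}{n-1}\bigr)^\ell$ simplifies to $n^\ell$; what remains is precisely the first expression of \eqref{Jackson-difference-formula}.

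The remaining assertions then follow routinely. The second explicit formula comes from recognizing the alternating binomial sum as $\Delta^{n-1}(x^\ell)$ evaluated at $x = (1-n)/2$, via the standard identity $\Delta^{m}(f)(x) = \sum_{k=0}^{m}(-1)^{m-k}\binom{m}{k} f(x+k)$ together with the sign toggle $(\tfrac{1-n}{2}+k)^\ell = (-1)^\ell(\tfrac{n-1}{2}-k)^\ell$. The generating function is obtained by summing the resulting geometric series in $\ell$ term-by-term:
\begin{equation*}
\sum_{\ell \geq 0} t(n,\ell)\, x^\ell \;=\; \frac{1}{n!} \sum_{k=0}^{n-1} \frac{(-1)^k \binom{n-1}{k}}{1 - xn\left(\tfrac{n-1}{2}-k\right)}.
\end{equation*}
The hard part will be the final algebraic step: combining these $n$ partial fractions over the common denominator $\prod_{k=0}^{n-1}\bigl(1 - xn(\tfrac{n-1}{2}-k)\bigr)$ and identifying the resulting numerator as the single monomial $n^{n-2}x^{n-1}$. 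This can be carried out either by a direct residue computation at the $n$ simple poles of the product, or, more slickly, by using the known vanishing $t(n,\ell)=0$ for $\ell<n-1$ together with the Hurwitz--D\'enes value $t(n,n-1)=n^{n-2}$ to force the polynomial numerator (of degree at most $n-1$) to be exactly $n^{n-2}x^{n-1}$.
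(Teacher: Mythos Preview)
Your proposal is correct. Note, however, that the paper does not actually prove this statement: Jackson's theorem is quoted as background, and the only argument the paper supplies (in Section~\ref{questions-remarks}) is the partial-fraction verification that the product form \eqref{Jackson-ordinary-gf-product} and the sum form \eqref{Jackson-difference-formula} agree---precisely the ``combine over a common denominator'' step you outline at the end.

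Your derivation from scratch via Frobenius's formula, the Murnaghan--Nakayama reduction to hooks, and the content formula for transposition characters is exactly the $q=1$ shadow of the method the paper employs for its $q$-analogues (Theorems~\ref{q-factorization-theorem} and \ref{fixed-det-sequence-theorem}): compare your Murnaghan--Nakayama step with Proposition~\ref{Singer-cycle-character-values}(i,ii), and your content-formula step with Lemma~\ref{normalized-characters-on-semisimple-reflections}(ii). So while your argument goes beyond what the paper does for this particular classical statement, it is entirely in harmony with the paper's approach to its main results.

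One small remark on the final step: when $n$ is odd, the factor with $k=(n-1)/2$ in the product \eqref{Jackson-ordinary-gf-product} is identically $1$, so the denominator has degree $n-1$ rather than $n$, and correspondingly the $k=(n-1)/2$ term in your partial-fraction sum is a constant rather than a proper fraction. Your degree-counting argument (numerator of degree at most $n-1$, forced by the vanishing $t(n,\ell)=0$ for $\ell<n-1$ and by the Hurwitz--D\'enes value to be $n^{n-2}x^{n-1}$) still goes through unchanged, but it is worth flagging if you write it out carefully.
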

\noindent
Here the difference operator $\Delta(f)(x):=f(x+1)-f(x)$ satisfies
$
\Delta^n(f)(x):=\sum_{k=0}^n (-1)^{n-k} \binom{n}{k} f(x+k).
$

Our goals are $q$-analogues, replacing the symmetric group $\Symm_n$ with
the {\it general linear group} $GL_n(\FF_q)$, replacing transpositions with
{\it reflections}, and replacing an $n$-cycle with a {\it Singer cycle} $c$:
the image of a generator for the cyclic group
$\FF_{q^n}^\times \cong \ZZ/(q^n-1)\ZZ$ under any embedding
$
\FF_{q^n}^\times \hookrightarrow GL_{\FF_q}(\FF_{q^n}) \cong GL_n(\FF_q)
$
that comes from a choice of $\FF_q$-vector space isomorphism 
$\FF_{q^n} \cong \FF_q^n$.  The analogy between Singer cycles in $GL_n(\FF_q)$ and
$n$-cycles in $\Symm_n$ is reasonably
well-established \cite[\S 7]{StantonWebbR},  \cite[\S\S 8-9]{StantonWhiteR}.
Fixing such a Singer cycle $c$,
denote by $t_q(n,\ell)$ the number of ordered factorizations $(t_1,\ldots,t_\ell)$ 
of $c=t_1 t_2 \cdots t_\ell$ in which each $t_i$ is a {\it reflection} 
in $GL_n(\FF_q)$, that is, the fixed space $(\FF_q^n)^{t_i}$ is a hyperplane in $\FF_q^n$.

\begin{theorem}
\label{q-Denes-theorem}
For $n \geq 2$, one has
$
t_q(n,n)=(q^n-1)^{n-1}.
$
\end{theorem}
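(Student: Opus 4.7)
The strategy is the classical Frobenius character formula for factorizations. For a finite group $G$, an element $g\in G$, and a conjugation-closed subset $S\subseteq G$, the number of ordered factorizations $g=s_1\cdots s_\ell$ with each $s_i\in S$ is
\[
\frac{1}{|G|}\sum_{\chi\in\Irr(G)}\frac{\overline{\chi(g)}}{\chi(1)^{\ell-1}}\left(\sum_{s\in S}\chi(s)\right)^{\ell}.
\]
Taking $G=GL_n(\FF_q)$, $g=c$ a Singer cycle, $S$ the set of all reflections, and $\ell=n$, the proof reduces to three subproblems: (i) identify which $\chi\in\Irr(G)$ satisfy $\chi(c)\ne 0$ and compute those values; (ii) compute the reflection-sum $\sigma(\chi):=\sum_{r\text{ reflection}}\chi(r)$ for those $\chi$; and (iii) evaluate and simplify the resulting sum over $\chi$.

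For step (i), the key input is that a Singer cycle $c$ is a regular element of the Coxeter (anisotropic) maximal torus $T_c\cong\FF_{q^n}^{\times}$. By standard character theory of $GL_n(\FF_q)$ (Green's original computation, or the modern Deligne--Lusztig framework), most irreducibles vanish on $c$, and those that do not are organized by characters $\theta:\FF_{q^n}^{\times}\to\CC^{\times}$: regular $\theta$ (trivial Frobenius stabilizer) produce cuspidal ``Coxeter-type'' characters $\chi_\theta$ of degree $\chi_\theta(1)=\prod_{i=1}^{n-1}(q^i-1)$, whose value on $c$ is an explicit sum of Galois conjugates of $\theta(c)$, while non-regular $\theta$ contribute further irreducibles through the Deligne--Lusztig virtual characters $R_{T_c}^\theta$.

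For step (ii), reflections in $GL_n(\FF_q)$ split into two families: semisimple reflections, with a single non-identity eigenvalue $\alpha\in\FF_q^{\times}$, and unipotent transvections. Their values on each $\chi_\theta$ admit reasonably explicit formulas, with the semisimple contribution controlled by restriction of $\theta$ to the subfield $\FF_q^{\times}\subseteq\FF_{q^n}^{\times}$ and the transvection contribution given by a Green-function evaluation. Summing each contribution over the relevant reflection classes should yield a clean $q$-product expression for $\sigma(\chi_\theta)$.

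The main obstacle will be step (iii): after substitution, one must verify that
\[
\frac{1}{|GL_n(\FF_q)|}\sum_{\theta}\frac{\overline{\chi_\theta(c)}\,\sigma(\chi_\theta)^n}{\chi_\theta(1)^{n-1}}
\]
collapses to the compact answer $(q^n-1)^{n-1}$. The factor $\chi_\theta(1)^{n-1}$ in the denominator should cancel most of $|GL_n(\FF_q)|=q^{\binom{n}{2}}\prod_{i=1}^n(q^i-1)$ out front, reducing the problem to an identity among sums of roots of unity $\theta(c)^{q^j}$ that should be amenable to character orthogonality on $\FF_{q^n}^{\times}$. Bookkeeping of the Deligne--Lusztig signs and carefully separating the contributions of regular versus non-regular $\theta$ (the latter contributing non-cuspidal Harish-Chandra summands) will require effort, but the extreme simplicity of the target $(q^n-1)^{n-1}$ strongly suggests that a near-miraculous cancellation drives everything to a single clean product.
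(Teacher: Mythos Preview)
Your approach via the Frobenius formula is exactly the paper's strategy, but what you have written is a sketch rather than a proof: the computations in steps (ii) and (iii) are deferred with phrases like ``should yield'' and ``one must verify,'' and the promised cancellation is never carried out.

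More substantively, your emphasis is misplaced in a way that would make the computation harder than necessary. You foreground the cuspidal Coxeter-type characters $\chi_\theta$ for regular $\theta$, but for $q>2$ these contribute \emph{nothing}. The paper's key structural observations are: (a) the irreducibles with $\chi(c)\ne 0$ are exactly the primary characters $\chi^{U,\lambda}$ with $U\in\CCC_s$ for $s\mid n$ and $\lambda$ a \emph{hook} of size $n/s$; and (b) when $q>2$, since $\det(c)$ is a primitive root in $\FF_q^\times$ and hence $\ne 1$, at least one factor in any factorization must be a semisimple reflection, and the normalized character of any such reflection vanishes on every primary $\chi^{U,\lambda}$ with $\wt(U)=s\ge 2$. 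Thus only the $s=1$ terms survive, parametrized by the $q-1$ linear characters $U:\FF_q^\times\to\CC^\times$ and by $k\in\{0,\ldots,n-1\}$. The paper in fact refines further, fixing the determinant sequence $(\det t_i)$ and showing the count depends only on the number $m$ of transvections; at $\ell=n$ this gives $[n]_q^{n-1}$ factorizations for each of the $(q-1)^{n-1}$ admissible sequences, whence $t_q(n,n)=((q-1)[n]_q)^{n-1}=(q^n-1)^{n-1}$.

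For $q=2$ every reflection is a transvection, observation (b) fails, the $s\ge 2$ terms genuinely contribute, and a separate computation involving $\sum_{U\in\CCC_s}\chi^{U,\hook{k}{n/s}}(c^{-1})=(-1)^{n-n/s-k}\mu(s)$ is required. None of this dichotomy appears in your outline, and without it there is no visible mechanism for the cancellation you anticipate.
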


\begin{theorem}
\label{q-factorization-theorem}
For $n \geq 2$, more generally $t_q(n,\ell)$ has ordinary generating function
\begin{equation}
\label{q-Jackson-ordinary-gf-product}
\sum_{\ell \geq 0} t_q(n,\ell) x^\ell
= (q^n-1)^{n-1}  x^n \cdot
 \left(1+x[n]_q \right)^{-1} \prod_{k=0}^{n-1} \left(1+x[n]_q(1+q^k-q^{k+1})\right)^{-1}
\end{equation}
and explicit formulas
\begin{align}
 \label{q-Jackson-sum-formula}
 \qquad
t_q(n,\ell) 
&= \frac{(-[n]_q)^\ell}{q^{\binom{n}{2}}(q;q)_n} 
  \left(
    (-1)^{n-1} (q;q)_{n-1}+
    \sum_{k=0}^{n-1}(-1)^{k+n} q^{\binom{k+1}{2}} \qbin{n-1}{k}{q} 
                           (1+q^{n-k-1}-q^{n-k})^\ell \right) \\
 \label{q-Jackson-difference-formula}
 &= (1-q)^{-1}\frac{(-[n]_q)^\ell}{[n]!_q} \left[ \Delta_q^{n-1} 
        \biggl(\frac{1}{x}-\frac{(1+x(1-q))^\ell}{x}\biggr)\right]_{x=1}\\
\label{tot-num-formula}
&=
[n]_q^{\ell-1}\sum_{i=0}^{\ell-n} (-1)^i (q-1)^{\ell-i-1} \binom{\ell}{i}  
\qbin{\ell-i-1}{n-1}{q}.
\end{align}
\end{theorem}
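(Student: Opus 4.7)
The plan is to apply the Frobenius character-sum formula. Because the set $\mathcal{R}$ of all reflections in $G := GL_n(\FF_q)$ is a union of conjugacy classes (one transvection class, together with $q-2$ semisimple classes indexed by the nontrivial eigenvalue $\alpha \in \FF_q^\times \setminus \{1\}$), Frobenius gives
\[
t_q(n,\ell) \;=\; \frac{1}{|G|} \sum_{\chi \in \Irr(G)} \frac{\overline{\chi(c)}}{\chi(1)^{\ell-1}} \Bigl(\, \sum_{t \in \mathcal{R}} \chi(t) \Bigr)^{\ell}.
\]
The character values $\chi(c)$, $\chi(t_{\mathrm{un}})$, and $\chi(t_{\mathrm{ss},\alpha})$ are precisely what the paper's earlier sections compile, so the whole argument reduces to feeding that data into this identity.

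The first key simplification is that $\chi(c)$ vanishes outside a short explicit family of irreducibles. This is the $q$-analogue of the fact that, in $\Symm_n$, only the $n$ hook characters $S^{\hook{k}{n}}$ take nonzero value on the $n$-cycle---a manifestation of the Singer cycle being regular in the Coxeter torus. Matching the shape of \eqref{q-Jackson-sum-formula}, one expects a surviving list of size $n+1$: $n$ ``hook-type'' characters indexed by $k \in \{0,1,\ldots,n-1\}$ together with one distinguished Steinberg-like character that produces the isolated summand $(-1)^{n-1}(q;q)_{n-1}$. For each surviving $\chi$ one reads off the degree $\chi(1)$ (a $q$-hook product) and $\chi(c)$, and one evaluates the reflection sum
\[
\sum_{t \in \mathcal{R}} \chi(t) \;=\; |\mathcal{R}_{\mathrm{un}}| \, \chi(t_{\mathrm{un}}) + \sum_{\alpha \neq 1} |\mathcal{R}_{\mathrm{ss},\alpha}| \, \chi(t_{\mathrm{ss},\alpha}).
\]
The expected outcome is that this sum collapses to $-[n]_q(1+q^{n-k-1}-q^{n-k})$ in the generic case and to $-[n]_q$ for the distinguished character. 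Combined with the standard $q$-Pochhammer factorization of $|G|$, this gives \eqref{q-Jackson-sum-formula} after straightforward algebra.

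The remaining two formulas then follow from \eqref{q-Jackson-sum-formula} by self-contained $q$-series manipulations. The generating function \eqref{q-Jackson-ordinary-gf-product} comes from summing the $n+1$ geometric series in $x$ arising from the terms $1^\ell$ and $(1+q^{n-k-1}-q^{n-k})^\ell$. The $q$-difference form \eqref{q-Jackson-difference-formula} comes from recognizing the alternating $k$-sum in \eqref{q-Jackson-sum-formula} as an application of the standard expansion of $\Delta_q^{n-1}$ in terms of the $q$-binomials $\qbin{n-1}{k}{q}$ weighted by powers $q^{\binom{k+1}{2}}$, applied to the test function $\bigl(1-(1+x(1-q))^\ell\bigr)/x$ so that the distinguished Steinberg-like term is absorbed into the ``missing'' $\ell=0$ correction. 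Finally, \eqref{tot-num-formula} comes by re-expanding \eqref{q-Jackson-ordinary-gf-product} in a different way: writing each factor $1/(1+x[n]_q\beta)$ as a geometric series in $x$ simultaneously across all $n+1$ factors, and then applying a $q$-Vandermonde-type identity to collapse the resulting multiple sum into the stated single sum over $i$. The main technical obstacle throughout is step two: computing the reflection character sum cleanly across all classes and reconciling signs, powers of $q$, and the distinguished-versus-generic dichotomy until the clean shape $-[n]_q(1+q^{n-k-1}-q^{n-k})$ emerges.
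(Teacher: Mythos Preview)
Your overall architecture is right, but there is a genuine misidentification at the heart of step two. The term $(-1)^{n-1}(q;q)_{n-1}$ in \eqref{q-Jackson-sum-formula} does \emph{not} come from a single ``distinguished Steinberg-like character.'' What actually happens is that, among all irreducibles non-vanishing on $c$ (the primary hook characters $\chi^{U,\hook{k}{n/s}}$), exactly $n$ of them---the \emph{unipotent} hooks $\chi^{\one,\hook{k}{n}}$ with $U=\one\in\CCC_1$---have the normalized reflection-sum value $\normchi(z)=-[n]_q(1+q^{n-k-1}-q^{n-k})$ you predict. \emph{Every} other surviving character, whether a hook with $U\in\CCC_1\setminus\{\one\}$ or a hook with $s\ge 2$, has $\normchi(z)=-[n]_q$: for $U\ne\one$ the character sum $\sum_{\alpha\ne 1}U(\alpha)=-1$ collapses the $q^{n-k-1}$ terms, and for $s\ge 2$ the semisimple classes contribute zero (Lemma~\ref{normalized-characters-on-semisimple-reflections}(i)) while the transvection class contributes $-[n]_q$ (Corollary~\ref{normalized-character-on-reflection-class}). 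So the ``distinguished'' contribution is an aggregate over an infinite-looking family, and you need an extra device to evaluate $\sum_{\chi\ne\chi^{\one,\hook{k}{n}}}\deg(\chi)\chi(c^{-1})$. The clean way is column orthogonality: since $c\ne e$, the full sum $\sum_\chi\deg(\chi)\chi(c^{-1})$ vanishes, so this aggregate equals $-\sum_{k=0}^{n-1}q^{\binom{k+1}{2}}\qbin{n-1}{k}{q}(-1)^k=-(q;q)_{n-1}$ by the $q$-binomial theorem. With that fix your argument goes through uniformly in $q$.

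This is in fact \emph{not} how the paper proceeds. The paper splits into two cases. For $q>2$ it first proves the refined Theorem~\ref{fixed-det-sequence-theorem} (factorizations with a prescribed determinant sequence), where the presence of at least one semisimple reflection forces all $s\ge 2$ terms to vanish outright; it then recovers $t_q(n,\ell)$ by summing $t_q(n,\ell,m)$ over determinant sequences. For $q=2$ it computes directly, and there the $(q;q)_{n-1}$ term arises from the cuspidals with $s\ge 2$ via the M\"obius-function identity of Proposition~\ref{Singer-cycle-character-values}(iv). Your (corrected) route is arguably cleaner in that it treats all $q$ at once and hides the $s\ge 2$ cuspidal analysis behind orthogonality; the paper's route has the advantage of yielding the finer Theorem~\ref{fixed-det-sequence-theorem} along the way. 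Your derivations of \eqref{q-Jackson-difference-formula} and \eqref{q-Jackson-ordinary-gf-product} from \eqref{q-Jackson-sum-formula} match the paper's (Proposition~\ref{q-Jackson-sum-equals-tot-num-lemma} and Section~\ref{questions-remarks}); for \eqref{tot-num-formula} the paper goes via \eqref{q-Jackson-difference-formula} and a binomial expansion rather than via \eqref{q-Jackson-ordinary-gf-product} and $q$-Vandermonde, but either works.
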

The $q$-analogues used above and elsewhere in the paper are defined as follows:
\[
\begin{aligned}
\qbin{n}{k}{q}&:=\frac{[n]!_q}{[k]!_q [n-k]!_q}, \,\, \text{ where } \,
[n]!_q:=[1]_q [2]_q \cdots [n]_q \,\, \text{ and } \,
[n]_q:=1+q+q^2+\cdots+q^{n-1}, \\
(x;q)_n&:=(1-x)(1-xq)(1-xq^2)\cdots(1-xq^{n-1}), \,\, \textrm{ and }\\
\Delta_q(f)(x)&:=\frac{f(x)-f(qx)}{x-qx}, \text{ so that }
\end{aligned}
\]
\begin{equation}
\label{q-diff-iterate}
\Delta_q^n(f)(x)
 =\frac{1}{q^{\binom{n}{2}} x^n(1-q)^n} 
 \sum_{k=0}^n (-1)^{n-k} q^{\binom{k}{2}} \qbin{n}{k}{q}f(q^{n-k} x).
\end{equation}
The equivalence of the three formulas \eqref{q-Jackson-sum-formula},
\eqref{q-Jackson-difference-formula}, \eqref{tot-num-formula} for $t_q(n,\ell)$
is explained in Proposition~\ref{q-Jackson-sum-equals-tot-num-lemma} below.

In fact, we will prove the following refinement of Theorem~\ref{q-factorization-theorem} 
for $q>2,$ having no counterpart for $\Symm_n$.  
Transpositions are all conjugate within $\Symm_n$, but 
the conjugacy class of a 
reflection $t$ in $GL_n(\FF_q)$ for $q>2$ varies with its 
determinant $\det(t)$ in $\FF_q^\times$.  
When $\det(t)=1$, the reflection $t$ is 
called a {\it transvection} \cite[XIII \S 9]{Lang}, 
while $\det(t) \neq 1$ means that 
$t$ is a {\it semisimple reflection}.  One can associate 
to an ordered factorization $(t_1,\ldots,t_\ell)$ of $c=t_1 t_2 \cdots t_\ell$ 
the sequence of determinants 
$(\det(t_1),\ldots,\det(t_\ell))$ in $\FF_q^\ell$, having product $\det(c)$.

\begin{theorem}
\label{fixed-det-sequence-theorem}
Let $q>2.$ 
Fix a Singer cycle $c$ in $GL_n(\FF_q)$ and a sequence $\alpha=(\alpha_i)_{i=1}^\ell$ in 
$(\FF_q^\times)^\ell$ with $\prod_{i=1}^\ell \alpha_i = \det(c)$.  
Let $m$ be the number of values $i$ such that $\alpha_i = 1$.  
Then one has $m \le \ell-1,$
and the number of ordered reflection factorizations $c=t_1 \cdots t_\ell$
with $\det(t_i)=\alpha_i$ depends only upon $\ell$ and $m$.
This quantity $t_q(n,\ell,m)$ is given by these formulas:
\begin{align}
\label{det-sequence-difference-formula}
t_q(n,\ell,m) &=
[n]_q^{\ell - 1} \sum_{i = 0}^{\min(m,\ell-n)} (-1)^i \binom{m}{i} \qbin{\ell - i - 1}{n - 1}{q}
\\
\label{q-diff-nlm}
&= \frac{[n]_q^\ell}{[n]!_q} 
\left[ \Delta_q^{n-1}\bigl( (x-1)^mx^{\ell-m-1}\bigr) \right]_{x=1}.
\end{align}
In particular, setting $\ell=n$ in \eqref{det-sequence-difference-formula}, 
the number of shortest such factorizations is
\[
t_q(n,n,m)=
[n]_q^{n - 1},
\]
which depends neither on the sequence $\alpha=(\det(t_i))_{i=1}^\ell$ 
nor on the number of transvections $m$.
\end{theorem}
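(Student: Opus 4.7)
The natural approach is Frobenius's classical character-theoretic formula, the ``standard character-theory technique'' announced in the abstract. For any finite group $G$, conjugacy classes $C_1,\dots,C_\ell$, and target $c \in G$, the number of tuples $(t_1,\dots,t_\ell)$ with $t_i \in C_i$ and $t_1 \cdots t_\ell = c$ equals
\[
\frac{|C_1|\cdots|C_\ell|}{|G|} \sum_{\chi \in \Irr(G)} \frac{\chi(C_1)\cdots\chi(C_\ell)\,\overline{\chi(c)}}{\chi(1)^{\ell-1}}.
\]
The plan is to apply this with $G=GL_n(\FF_q)$ and $c$ the Singer cycle, taking $C_i$ to be the transvection class if $\alpha_i=1$ and the class of semisimple reflections of determinant $\alpha_i$ otherwise. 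Since $|C_i|$ is independent of the specific value of $\alpha_i \ne 1$, the prefactor $|C_1|\cdots|C_\ell|/|G|$ depends only on $(\ell,m)$. The inequality $m \le \ell-1$ follows from $\det(c) \ne 1$ when $q>2$, since $\det(c)$ is the norm of a generator of $\FF_{q^n}^\times$ and hence generates $\FF_q^\times$.

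The central step is to show that the character sum itself depends only on $(\ell,m)$. For this I would invoke the character tables compiled earlier in the paper to verify the following factorization property: for each irreducible $\chi$ with $\chi(c) \ne 0$, there exist a linear character $\psi_\chi$ of $\FF_q^\times$ and a constant $A_\chi$ (possibly zero, as one expects for the cuspidal-like characters attached to the Singer torus) such that
\[
\chi(R_\alpha) = \psi_\chi(\alpha) \cdot A_\chi \qquad \text{for every } \alpha \in \FF_q^\times \setminus \{1\}.
\]
Granting this, and using $\prod_{i=1}^\ell \alpha_i = \det(c)$ together with $\alpha_i=1$ at transvection positions, one obtains
\[
\prod_{i\colon \alpha_i\ne 1}\chi(R_{\alpha_i}) = A_\chi^{\ell-m}\,\psi_\chi(\det c),
\]
manifestly $\alpha$-independent; characters with $A_\chi=0$ automatically drop out because $\ell-m \ge 1$.

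Once this $\alpha$-independence is established, the character sum collapses exactly as in the proof of Theorem~\ref{q-factorization-theorem}, with the transvection values $\chi(T)$ and the constants $A_\chi$ contributing separately to the $m$-th and $(\ell-m)$-th powers. Expanding via the iterate identity \eqref{q-diff-iterate} for $\Delta_q$ should yield \eqref{q-diff-nlm}, where the factor $(x-1)^m x^{\ell-m-1}$ precisely encodes the split between transvection and semisimple contributions, and reindexing then produces \eqref{det-sequence-difference-formula}. I expect the main technical obstacle to be the case-by-case verification of the factorization property above from the Green-style character tables; the subsequent enumerative identities are routine consequences of the $q$-difference calculus. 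Finally, specializing $\ell=n$ forces $\min(m,\ell-n)=0$ in \eqref{det-sequence-difference-formula}, leaving only the $i=0$ term $[n]_q^{n-1}\binom{m}{0}\qbin{n-1}{n-1}{q}=[n]_q^{n-1}$, independent of both $\alpha$ and $m$.
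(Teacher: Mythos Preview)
Your proposal is correct and follows essentially the same route as the paper: the Frobenius formula, the factorization $\chi(R_\alpha)=\psi_\chi(\alpha)\cdot A_\chi$ you anticipate is exactly Lemma~\ref{normalized-characters-on-semisimple-reflections} (with $\psi_\chi=U\in\CCC_1$ and $A_\chi=0$ for $\wt(U)\ge 2$), and the cancellation of $\psi_\chi(\det c)$ against the factor $U(\det c^{-1})$ in $\overline{\chi(c)}$ from Proposition~\ref{Singer-cycle-character-values}(iii) leaves a sum over $U\in\CCC_1$ and over hook shapes $k=0,\dots,n-1$ that the paper rewrites via \eqref{q-diff-iterate} as \eqref{q-diff-nlm}. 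One organizational caveat: your appeal to ``collapses exactly as in the proof of Theorem~\ref{q-factorization-theorem}'' is circular, since in the paper the logical dependence runs the other way---Theorem~\ref{q-factorization-theorem} for $q>2$ is \emph{derived from} Theorem~\ref{fixed-det-sequence-theorem} by summing over all determinant sequences $\alpha$, so the collapse to \eqref{q-diff-nlm} must be (and is) carried out here directly.
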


\noindent
The equivalence of the formulas \eqref{det-sequence-difference-formula}
and \eqref{q-diff-nlm} for $t_q(n,\ell,m)$ is also explained in
Proposition~\ref{q-Jackson-sum-equals-tot-num-lemma} below.

Theorems~\ref{q-factorization-theorem} 
and \ref{fixed-det-sequence-theorem} are proven via 
a standard character-theoretic approach.  This approach is reviewed
quickly in Section~\ref{character-approach-section}, followed
by an outline of ordinary character theory for
$GL_n(\FF_q)$ in Section~\ref{GL-character-theory}.   
Section~\ref{character-values-section} either 
reviews or derives the needed explicit character values for 
four kinds of conjugacy classes:
the identity element, Singer cycles, semisimple reflections, and transvections.
Then Section~\ref{main-result-proof-section} assembles these calculations into
the proofs of  Theorems~\ref{q-factorization-theorem} and \ref{fixed-det-sequence-theorem}.
Section~\ref{questions-remarks} closes with some further remarks and questions.

Although Theorem~\ref{fixed-det-sequence-theorem} is stated for $q>2$, something interesting
also occurs for $q=2$.
All reflections in $GL_n(\FF_2)$ are transvections, thus
one always has $m=\ell$ for $q=2$. Furthermore,  
one can see that \eqref{tot-num-formula}, \eqref{det-sequence-difference-formula}
give the same answer when both $q=2$ and $m=\ell$. This reflects a striking dichotomy in our
proofs:  for $q > 2$ the only 
contributions to the computation come from irreducible characters of
$GL_n(\FF_q)$ arising as constituents of parabolic inductions of 
characters of $GL_1(\FF_q)$,  
while for $q =2$ the cuspidal characters for $GL_s(\FF_q)$
with $s \geq 2$ play a role, miraculously giving 
the same polynomial $t_q(n,\ell)$ in $q$ evaluated at $q=2$.

\begin{question} 
Can one derive the formulas \eqref{tot-num-formula} or 
\eqref{det-sequence-difference-formula} 
via {\it inclusion-exclusion}
more directly?
\end{question}

\begin{question}
\label{q-Denes-proof-question}
Can one derive Theorem~\ref{q-Denes-theorem}
{\it bijectively}, or by an {\it overcount} in the 
spirit of  D\'enes \cite{Denes},
that counts factorizations of all conjugates of a Singer cycle, and 
then divides by the conjugacy class size?
\end{question}
%

\section{The character theory approach to factorizations}
\label{character-approach-section}

We recall the classical approach to factorization
counts, which goes back to Frobenius \cite{Frobenius}.

\begin{definition}
Given a finite group $G$, let 
$\Irr(G)$ be the set of its irreducible ordinary (finite-dimensional,
complex) representations $V$.  For each $V$ in $\Irr(G)$, denote by
$\deg(V)$ the {\it degree} $\dim_\CC V$, and
let 
$
\chi_{V}(g)=\Tr(g: V \rightarrow V)
$
be its {\it character value} at $g$,
along with  $\normchi_{V}(g):=\frac{\chi_V(g)}{\deg(V)}$ the
{\it normalized character value}. Both functions $\chi_{V}(-)$ and $\normchi_{V}(-)$ on $G$
extend by $\CC$-linearity to functionals on the {\it group algebra} $\CC G$. 
\end{definition}

\begin{proposition}[Frobenius \cite{Frobenius}]
\label{conj-count-prop}
Let $G$ be a finite group, and $A_1,\ldots,A_\ell \subset G$
unions of conjugacy classes in $G$.  Then for $g$ in $G$,
the number of ordered factorizations $(t_1,\ldots,t_\ell)$ with
$g=t_1 \cdots t_\ell$ and $t_i$ in $A_i$ for $i=1,2,\ldots,\ell$ is
\begin{equation}
\label{Chapuy-Stump-varying-class-answer}
\frac{1}{|G|} \sum_{V \in \Irr(G)} 
 \deg(V) \cdot \chi_V(g^{-1}) \cdot \normchi_{V}(z_1) \cdots \normchi_{V}(z_\ell).
\end{equation}
where $z_i:=\sum_{t \in A_i} t$ in $\CC G$.
\end{proposition}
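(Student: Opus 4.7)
The plan is to apply the classical Frobenius approach, combining the regular representation of $\CC G$ with Schur's lemma on the center of the group algebra. First, I would reinterpret the quantity to be counted as a coefficient in $\CC G$. Expanding
\begin{equation*}
z_1 z_2 \cdots z_\ell = \sum_{(t_1,\ldots,t_\ell) \in A_1 \times \cdots \times A_\ell} t_1 t_2 \cdots t_\ell = \sum_{h \in G} N(h)\, h,
\end{equation*}
where $N(h)$ is the number of such factorizations with target $h$, reduces the problem to extracting the coefficient $N(g)$ from the element $z_1 \cdots z_\ell$.

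Next, I would invoke the coefficient-extraction identity coming from the regular representation. Since $\chi_{\mathrm{reg}}(h)$ equals $|G|$ when $h = 1$ and vanishes otherwise, and $\chi_{\mathrm{reg}} = \sum_{V \in \Irr(G)} \deg(V) \chi_V$, any element $w = \sum_h c_h h$ of $\CC G$ satisfies
\begin{equation*}
c_g = \tfrac{1}{|G|} \chi_{\mathrm{reg}}(g^{-1} w) = \tfrac{1}{|G|} \sum_{V \in \Irr(G)} \deg(V) \cdot \chi_V(g^{-1} w).
\end{equation*}
Setting $w = z_1 \cdots z_\ell$ reduces the task to computing $\chi_V(g^{-1} z_1 \cdots z_\ell)$ for each irreducible $V$.

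The crucial structural input is that each $z_i$ lies in the center $Z(\CC G)$, because $A_i$ is stable under conjugation. By Schur's lemma, $z_i$ then acts on any irreducible $V$ as a scalar $\lambda_i$; taking traces gives $\chi_V(z_i) = \deg(V) \cdot \lambda_i$, so $\lambda_i = \normchi_V(z_i)$. Consequently the action of $g^{-1} z_1 \cdots z_\ell$ on $V$ equals the scalar $\prod_{i=1}^\ell \normchi_V(z_i)$ times the action of $g^{-1}$, whence
\begin{equation*}
\chi_V(g^{-1} z_1 \cdots z_\ell) = \chi_V(g^{-1}) \prod_{i=1}^\ell \normchi_V(z_i).
\end{equation*}
Substituting this into the coefficient-extraction formula produces exactly \eqref{Chapuy-Stump-varying-class-answer}.

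I do not anticipate a real obstacle here: the argument is entirely structural, invoking only the character of the regular representation, the centrality of class sums, and Schur's lemma. The only subtle point is keeping the normalization straight between $\chi_V$ and $\normchi_V = \chi_V/\deg(V)$, which must be applied once per factor $z_i$ when converting the central action to its scalar, so that a single factor $\deg(V)$ survives outside the sum to pair with $\chi_V(g^{-1})$.
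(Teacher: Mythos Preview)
Your argument is correct and matches the approach the paper indicates: the paper does not spell out a proof but points to ``a straightforward computation in the group algebra $\CC G$ coupled with the isomorphism of $G$-representations $\CC G \cong \bigoplus_{V \in \Irr(G)} V^{\oplus \deg(V)}$,'' which is exactly your use of the regular character $\chi_{\mathrm{reg}} = \sum_V \deg(V)\chi_V$ together with Schur's lemma on the central class sums.
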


This lemma was a main tool used by Jackson \cite[\S 2]{Jackson-older},
as well as by Chapuy and Stump \cite[\S 4]{ChapuyStump} in their solution of the analogous
question in well-generated complex reflection groups.  The proof follows from a straightforward
computation in the group algebra $\CC G$ coupled with the isomorphism of $G$-representations 
$
\CC G \cong \bigoplus_{V \in \Irr(G)} V^{\oplus \deg(V)}
$; it may be found for example in \cite[Thm.~1.1.12]{LandoZvonkin}, \cite[Thm.~2.5.9]{LuxPahlings}.


\section{Review of ordinary characters of $GL_n(\FF_q)$}
\label{GL-character-theory}

The ordinary character theory of $GL_n:=GL_n(\FF_q)$ was worked out by
Green \cite{Green}, and has been reworked many times.  
Aside from Green's paper, some useful references for us in what 
follows will be Macdonald \cite[Chaps. III, IV]{Macdonald}, and
Zelevinsky \cite[\S 11]{Zelevinsky}.

\subsection{Parabolic or Harish-Chandra induction}
The key notion is that of {\it parabolic}
or {\it Harish-Chandra induction}:  given an integer composition
$\alpha=(\alpha_1,\ldots,\alpha_m)$ of $n$, so that $\alpha_i > 0$
and $|\alpha|:=\sum_i \alpha_i=n$, and 
class functions $f_i$ on $GL_{\alpha_i}$ for $i=1,2,\ldots,m$, 
one produces a class function $f_1 \hcprod f_2 \hcprod \cdots \hcprod f_m$ on $GL_n$
defined as follows.  Regard the $m$-tuple $(f_1,\ldots,f_m)$ as a class function on the 
block upper-triangular parabolic subgroup $P_\alpha$ inside $GL_n$,
whose typical element is
\begin{equation}
\label{parabolic-format}
p=
\begin{bmatrix}
A_{1,1} & * & \cdots & *\\
0   & A_{2,2}& \cdots& *\\
\vdots& \vdots & \ddots& \vdots\\
0     & 0 &\cdots       &A_{m,m}
\end{bmatrix}
\end{equation}
with $A_{i,i}$ an invertible $\alpha_i \times \alpha_i$ matrix,
via $(f_1,\ldots,f_m)(p)=\prod_{i=1}^m f_i(A_{i,i})$.
Then apply (ordinary) induction of characters from $P_\alpha$ up to $GL_n$.
In other words, for an element $g$ in $GL_n$ one has
\begin{equation}
\label{parabolic-induction-formula}
(f_1 \hcprod f_2 \hcprod \cdots \hcprod f_m)(g):=
\frac{1}{|P_\alpha|} \sum_{\substack{h \in G:\\ hgh^{-1} \in P_\alpha}} 
  f_1(A_{1,1}) \cdots f_m(A_{m,m})
\qquad 
\text{ if }hgh^{-1}\text{ looks as in }\eqref{parabolic-format}.
\end{equation}

Identify representations $U$ up to equivalence with their
characters $\chi_U$.  The parabolic induction product $(f,g) \longmapsto f\hcprod g$
gives rise to a graded, associative product on the graded
$\CC$-vector space 
\[
\Class(GL_*) = \bigoplus_{n \geq 0} \Class(GL_n)
\]
which is the direct sum of class functions on 
all of the general linear groups, with $\Class(GL_0)=\CC$ by convention.

\subsection{Parametrizing the $GL_n$-irreducibles}
\label{irreducible-parametrization-section}

A $GL_n$-irreducible $U$ is called {\it cuspidal} if $\chi_U$ 
does not occur as a constituent in any induced
character $f_1 \hcprod f_2$ for compositions $n=\alpha_1+\alpha_2$ with 
$\alpha_1,\alpha_2 >0$. Denote by $\CCC_n$ the set of
all such cuspidal irreducibles $U$ for $GL_n$,
and say that {\it weight $\wt(U)=n$}.  Let $\Par_n$ denote the
partitions $\lambda$ of $n$ (that is, $|\lambda|:=\sum_i \lambda_i=n$),
and define 
\begin{align*}
\Par&:=\bigsqcup_{ n \geq 0} \Par_n,\\
\CCC&:=\bigsqcup_{ n \geq 1} \CCC_n
\end{align*}
the sets of {\it all} partitions, and {\it all} cuspidal 
representations for all groups $GL_n$.  Then the 
$GL_n$-irreducible characters can be indexed as
$
\Irr(GL_n)=\{ \chi^{\llambda} \}
$ 
where $\llambda$ runs through the set of all functions 
\[
 \begin{array}{rcl}
\CCC &\overset{\llambda}{\longrightarrow}& \Par \\
U & \longmapsto & \lambda(U)
\end{array}
\]
having the property that
\begin{equation}
\label{irreducible-weight-condition}
\sum_{U \in \CCC} \wt(U) \, |\lambda(U)|=n.
\end{equation}
Although $\CCC$ is infinite, 
this condition \eqref{irreducible-weight-condition}
implies that $\llambda$ can only take on finitely many
non-$\varnothing$ values $\lambda(U_1),\ldots,\lambda(U_m)$,
and in this case
\begin{equation}
\label{general-irreducibles-are-induced}
\chi^{\llambda} = \chi^{U_1,\lambda(U_1)} \hcprod \cdots \hcprod \chi^{U_m,\lambda(U_m)}
\end{equation}
where each $\chi^{U,\lambda}$ is what Green \cite[\S 7]{Green} 
called a {\it primary irreducible character}.  In particular,
a cuspidal character $U$ in $\CCC_n$ is the same as
the primary irreducible $\chi^{U,(1)}$.

\subsection{Jacobi-Trudi formulas}
\label{Jacobi-Trudi-section}

We recall from symmetric function theory 
the {\it Jacobi-Trudi} and {\it dual Jacobi-Trudi formulas} \cite[I (3.4),(3.5)]{Macdonald}
formulas.  For a 
partition  $\lambda=(\lambda_1 \geq \cdots \geq \lambda_\ell)$ 
with largest part $m:=\lambda_1$, these formulas express a {\it Schur function} $s_\lambda$
either as an integer sum of products of {\it complete homogeneous} symmetric
functions $h_n=s_{(n)}$, or of {\it elementary} symmetric functions $e_n=s_{(1^n)}$:
\[
\begin{array}{rcccl}
s_\lambda
&=&\det( h_{\lambda_i-i+j} )
&=&\displaystyle
    \sum_{w \in \Symm_\ell}  
    \sgn(w) h_{\lambda_1-1+w(1)} \cdots h_{\lambda_\ell-\ell + w(\ell)}, \\
s_\lambda 
&=&\det( e_{\lambda'_i-i+j} )
&=&\displaystyle
    \sum_{w \in \Symm_{m}}  
    \sgn(w) e_{\lambda'_1-1+w(1)} \cdots e_{\lambda'_m-m + w(m)} .
\end{array}
\]
Here $\lambda'$ is the usual {\it conjugate} or {\it transpose} partition 
to $\lambda$.  Also $h_0=e_0=1$ and $h_n=e_n=0$ if $n < 0$.

The special case of primary irreducible $GL_n$-characters $\chi^{U,(n)}, \chi^{U,(1^n)}$
corresponding to the single row partitions $(n)$ and single column partitions
$(1^n)$ are called {\it generalized trivial} and 
{\it generalized Steinberg characters}, respectively,
by Silberger and Zink \cite{SilbergerZink}.  One has
analogous formulas expressing 
any primary irreducible character $\chi^{U,\lambda}$ 
virtually in terms of parabolic induction products of such
characters:
\begin{align}
\label{primary-irreducible-Jacobi-Trudi}
\chi^{U,\lambda}&= \sum_{w \in \Symm_\ell} 
        \sgn(w) \chi^{U,(\lambda_1-1+w(1))} \hcprod \cdots \hcprod 
                    \chi^{U,(\lambda_\ell-\ell + w(\ell))} \\
\label{primary-irreducible-dual-Jacobi-Trudi}
\chi^{U,\lambda}&= \sum_{w \in \Symm_{m}} 
        \sgn(w) \chi^{U,(1^{\lambda'_1-1+w(1)})} \hcprod \cdots \hcprod 
                    \chi^{ U, (1^{\lambda'_m-m + w(m)} ) } 
\end{align}
where  $\chi^{U,(n)}, \chi^{U,(1^n)}$ 
are both the zero character 
if $n < 0$, and the trivial character $\one_{GL_0}$ if $n=0$.

\subsection{The cuspidal characters: indexing and notation}

The set $\CCC_n$ of cuspidal characters for $GL_n(\FF_q)$
has the same cardinality 
$\frac{1}{n} \sum_{d | n} \mu(n/d) q^d$
as the set of irreducible polynomials in $\FF_q[x]$ of degree $n$,
or the set of {\it primitive necklaces} of $n$ beads
having $q$ possible colors (= free orbits under $n$-fold cyclic
rotation of words in $\{0,1,\ldots,q-1\}^n$).
There are at least two ways one sees $\CCC_n$ indexed in the literature.
\begin{itemize}
\item
Green \cite[\S 7]{Green} indexes $\CCC_n$ via free orbits 
$
[\beta]=\{\beta,\beta^q,\cdots,\beta^{q^{n-1}}\}
$ 
for the action of the {\it Frobenius map} 
$\beta \overset{F}{\longmapsto} \beta^q$
on the multiplicative group $\FF_{q^n}^\times$;
he calls such free orbits {\it $n$-simplices}.

In his notation, if $U$ lies in
$\CCC_s$ and is indexed by the orbit $[\beta]$ within $\FF_{q^s}$, 
then the primary $GL_n$-irreducible character 
$\chi^{U,\lambda}$ for a partition
$\lambda$ of $\frac{n}{s}$ 
is (up to a sign) what he denotes $I^\beta_s[\lambda]$.
The special case $I^\beta_s[(m)]$ he also denotes $I^\beta_s[m]$.
Thus the cuspidal $U$ itself is (up to sign) 
denoted $I^\beta_s[1]$, and he also uses 
the alternate terminology $J_s(\beta):=I^\beta_s[1]$; see \cite[p.\ 433]{Green}.
\item
Later authors index $\CCC_n$ via free orbits 
$
[\varphi]=\{\varphi,\varphi \circ F,\cdots, \varphi \circ F^{n-1}\}
$
for the Frobenius action on the {\it dual group $\Hom(\FF^\times_{q^n},\CC^\times)$}.
Say that $U$ in $\CCC_n$ is {\it associated} to the orbit $[\varphi]$ in this indexing.

When $n=1$, one simply has $\CCC_1=\Hom(\FF^\times_{q},\CC^\times)$.
In other words, the Frobenius orbits $[\varphi]=\{\varphi\}$ are
singletons, and if $U$ is associated to this orbit then $U=\varphi$ 
considering both as homomorphisms 
$$
GL_1(\FF_q) = \FF_q^\times \overset{U=\varphi}{\longrightarrow} \CC^\times.
$$
\end{itemize}

Although we will not need Green's full description of 
the characters $\chi^{U,(m)}$ and $\chi^{U,\lambda}$, 
we will use (in the proof of 
Lemma~\ref{normalized-characters-on-semisimple-reflections} below)
the following
consequence of his discussion surrounding \cite[Lemma 7.2]{Green}.

\begin{proposition}
\label{primaries-in-terms-of-cuspidals}
For $U$ in $\CCC_s$, every $\chi^{U,(m)}$, and hence also every
primary irreducible character $\chi^{U,\lambda}$,
is in the $\QQ$-span of characters of the form
$
\chi_{U_1} \hcprod \cdots \hcprod \chi_{U_t}
$
where $U_i$ is in $\CCC_{n_i}$, with $s$ dividing $n_i$ for each $i$.
\end{proposition}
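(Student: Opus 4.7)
My plan is to reduce to the generalized trivial characters $\chi^{U,(m)}$ via Jacobi-Trudi, and then induct on $m$, with Green's Lemma~7.2 providing the key input at each step.

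By the Jacobi-Trudi formula \eqref{primary-irreducible-Jacobi-Trudi}, any primary irreducible $\chi^{U,\lambda}$ is a signed $\ZZ$-linear combination of parabolic-induction products $\chi^{U,(k_1)}\hcprod\cdots\hcprod\chi^{U,(k_\ell)}$. Since the $\QQ$-span appearing in the statement is clearly closed under both $\hcprod$-products and $\QQ$-linear combinations, it suffices to prove the claim for each $\chi^{U,(m)}$ with $m\geq 1$. I would then induct on $m$; the base case $m=1$ is immediate because $\chi^{U,(1)}=U$ is itself a cuspidal of weight $s$.

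For the inductive step, I would translate the classical Newton identity $m\,h_m=\sum_{k=1}^m p_k\,h_{m-k}$ through the standard identification of the ``$U$-component'' of $\Class(GL_*)$ with the ring of symmetric functions (sending $\chi^{U,\lambda}\leftrightarrow s_\lambda$). This yields
\[
m\,\chi^{U,(m)} \;=\; \sum_{k=1}^m \psi^{U,k}\hcprod \chi^{U,(m-k)},
\]
where $\psi^{U,k}\in\Class(GL_{sk})$ is the power-sum virtual class function corresponding to $p_k$ in the $U$-component. Each factor $\chi^{U,(m-k)}$ with $k\geq 1$ lies in the target $\QQ$-span by the inductive hypothesis, so it suffices to show that each $\psi^{U,k}$ itself is a $\QQ$-linear combination of cuspidals whose weights are divisible by $s$.

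The main obstacle is precisely this last assertion, which is the real content of Green's Lemma~7.2 and its surrounding discussion. Concretely, if $U\in\CCC_s$ corresponds to a Frobenius orbit $[\varphi]\subset\widehat{\FF_{q^s}^\times}$, then for each divisor $d\mid k$ the composition $\varphi\circ N_{\FF_{q^{sd}}/\FF_{q^s}}$ with the norm map produces a character of $\FF_{q^{sd}}^\times$ whose Frobenius orbit determines a cuspidal character in $\CCC_{sd}$, and a M\"obius-type inversion over the divisors of $k$ writes $\psi^{U,k}$ as an explicit $\QQ$-combination of parabolic-induction products of these cuspidals --- all of which have weight divisible by $s$. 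Granting this, the induction closes and the proposition follows.
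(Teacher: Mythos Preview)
The paper does not actually give a proof of this proposition; it is stated as a direct consequence of Green's discussion surrounding his Lemma~7.2, with no further argument. Your proposal is a correct and natural unpacking of what that citation entails: the reduction to $\chi^{U,(m)}$ via Jacobi--Trudi, the Newton-identity recursion $m\,h_m=\sum_k p_k h_{m-k}$ transported to the $U$-isotypic copy of $\Lambda$ inside $\Class(GL_*)$, and the identification of the power-sum virtual characters $\psi^{U,k}$ with $\QQ$-combinations of cuspidals attached to the norm-lifts $\varphi\circ N_{\FF_{q^{sd}}/\FF_{q^s}}$ (whose Frobenius orbits have size divisible by $s$, since the norm map is surjective) is exactly the mechanism in Green's construction of the primary characters. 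So your approach is essentially the same as the paper's, just made explicit rather than left as a citation.

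One small point worth tightening: in the $k=m$ term of your recursion you need $\chi^{U,(0)}=\one_{GL_0}$, which lies in the target span as the empty $\hcprod$-product (the case $t=0$); and you should note that surjectivity of the norm is what guarantees the Frobenius-orbit size of $\varphi\circ N$ is a genuine multiple of $s$ rather than a proper divisor. With those remarks, your sketch is complete.
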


\section{Some explicit character values}
\label{character-values-section}

We will eventually wish to apply Proposition~\ref{conj-count-prop}
with $g$ being a Singer cycle, and with the central elements
$z_i$ being sums over classes of reflections with fixed determinants.
For this one requires explicit character values
on four kinds of conjugacy classes of elements in $GL_n(\FF_q)$:
\begin{itemize}
\item the identity, giving the character degrees,
\item the Singer cycles, 
\item the semisimple reflections, and 
\item the transvections.
\end{itemize}
We review known formulas for most of these, and derive others that we will need, in the next
four subsections.

It simplifies matters that the character value $\chi^{\llambda}(c^{-1})$ vanishes
for a Singer cycle $c$ unless $\chi^{\llambda}=\chi^{U,\lambda}$ is
a primary irreducible character and the partition $\lambda$ of $\frac{n}{s}$ 
takes a very special form; 
see Proposition~\ref{Singer-cycle-character-values} below.  (This may be compared with, for
example, Chapuy and Stump \cite[p.\ 9 and Lemma 5.5]{ChapuyStump}.)

\begin{definition}
The {\it hook-shaped partitions} of $n$ are 
$
\lambda=\hook{k}{n}
$
for $k=0,\ldots,n-1$.
\end{definition}

\noindent
Thus we only compute {\it primary}
irreducible character values, sometimes only those of
the form $\chi^{U,\hook{k}{\frac{n}{s}}}$.

\subsection{Character values at the identity:  the character degrees}

Green computed the degrees of the primary irreducible characters $\chi^{U,\lambda}$ as a
product formula involving familiar quantities associated to partitions.

\begin{definition}
For a partition $\lambda$, recall \cite[(1.5)]{Macdonald} the quantity 
$
n(\lambda):=\sum_{i \geq 1} (i-1)\lambda_i.
$
For a cell $a$ in row $i$ and column $j$ of the Ferrers diagram of $\lambda$
recall the {\it hooklength} \cite[Example I.1]{Macdonald} 
$$
h(a):=h_\lambda(a):=\lambda_i+\lambda'_j-(i+j)+1.
$$
\end{definition}

\begin{theorem}[{\cite[Theorem 12]{Green}}]
\label{dim-formula}
The primary irreducible $GL_n$-character $\chi^{U,\lambda}$
for a cuspidal character $U$ of $GL_s(\FF_q)$ and a partition
$\lambda$ of $\frac{n}{s}$ has degree
\[
\deg( \chi^{U,\lambda} )
=(-1)^{n-\frac{n}{s}} (q;q)_n 
   \frac{q^{s \cdot n(\lambda)}  }{ \prod_{a \in \lambda} (1-q^{s \cdot h(a)})}
=(-1)^{n-\frac{n}{s}} (q;q)_n s_\lambda(1,q^s,q^{2s},\ldots).
\]
\end{theorem}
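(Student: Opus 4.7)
The plan is to establish the two asserted expressions for $\deg(\chi^{U,\lambda})$ in turn. The second equality is a purely symmetric-function identity: the classical principal specialization
\[
s_\mu(1, t, t^2, \ldots) = \frac{t^{n(\mu)}}{\prod_{a \in \mu}(1 - t^{h(a)})}
\]
(see Macdonald I.3 Example 1), applied with $\mu = \lambda$ and $t = q^s$. It follows from the Jacobi-Trudi determinant together with the identity $h_m(1, t, t^2, \ldots) = 1/(t;t)_m$ (read off from the generating function $\sum h_m z^m = \prod_{i \geq 0} (1-t^i z)^{-1}$), and needs no character theory.

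For the first equality, which is Green's Theorem 12, the plan is to proceed in two stages via the primary Jacobi-Trudi identity \eqref{primary-irreducible-Jacobi-Trudi}. First, compute the degree of each generalized trivial character $\chi^{U,(m)}$: a direct count at the identity gives the cuspidal degree $\deg(\chi^{U,(1)}) = (-1)^{s-1}(q;q)_{s-1}$, and induction on $m$ (or an explicit realization of $\chi^{U,(m)}$ as a summand of the parabolic induction product $U^{\hcprod m}$) yields
\[
\deg(\chi^{U,(m)}) = (-1)^{m(s-1)} \frac{(q;q)_{ms}}{(q^s;q^s)_m}.
\]
Second, take degrees in \eqref{primary-irreducible-Jacobi-Trudi}. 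Writing $\mu_i := \lambda_i - i + w(i)$ and $\alpha_i := s\mu_i$, parabolic induction multiplies degrees by the index $[GL_n : P_\alpha] = (q;q)_n / \prod_i (q;q)_{\alpha_i}$ (the $q^{\binom{n}{2}}$ factor in $|GL_n|$ cancels against the unipotent-radical plus Levi contributions to $|P_\alpha|$). The $(q;q)_{s\mu_i}$ factors then cancel between the index and the degrees of the $\chi^{U,(\mu_i)}$, leaving
\[
\deg(\chi^{U,\lambda}) = (-1)^{(s-1) n/s}(q;q)_n \sum_{w \in \Symm_\ell} \sgn(w) \prod_{i=1}^\ell \frac{1}{(q^s;q^s)_{\lambda_i - i + w(i)}}.
\]
The exponent $(s-1)n/s$ equals $n - n/s$, matching the claimed sign, and the remaining sum is precisely the Jacobi-Trudi determinant expressing $s_\lambda(1, q^s, q^{2s}, \ldots)$.

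The main obstacle is the bookkeeping of signs and powers of $q$: one must verify the cancellation of the $q^{\binom{n}{2}}$ factor between $|GL_n|$ and $|P_\alpha|$ (relying on the identity $\sum_i \binom{\alpha_i}{2} + \sum_{i<j}\alpha_i\alpha_j = \binom{n}{2}$), and track the signs $(-1)^{s-1}$ contributed by each cuspidal factor so that they collect into $(-1)^{n-n/s}$. A conceptually cleaner alternative uses Zelevinsky's PSH framework, identifying $\Class(GL_\ast)$ with a tensor product of copies of the symmetric function algebra indexed by cuspidals; under this isomorphism $\chi^{U,\lambda}$ becomes the Schur function $s_\lambda$ in its $U$-copy, and the degree map becomes a principal specialization, so the Jacobi-Trudi sum is bypassed entirely, but the cuspidal degree is still required as input.
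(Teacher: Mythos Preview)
The paper does not prove this theorem; it is quoted with attribution to Green \cite[Theorem 12]{Green} and used as input, so there is no in-paper proof to compare against.

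Your outline is sound. The second equality is indeed just the principal specialization of Schur functions. For the first, your reduction via \eqref{primary-irreducible-Jacobi-Trudi} is correct: the functional $f \mapsto f(e)/(q;q)_n$ is multiplicative for the $\hcprod$-product (exactly the $q$-multinomial cancellation you describe), so once one knows $\deg(\chi^{U,(m)})$, taking degrees in \eqref{primary-irreducible-Jacobi-Trudi} reproduces the Jacobi--Trudi determinant for $s_\lambda(1,q^s,q^{2s},\ldots)$, and the signs collect as you say.

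The one genuine gap is the computation of $\deg(\chi^{U,(m)})$ itself. ``Induction on $m$'' via the Pieri rule $\chi^{U,(m-1)} \hcprod U = \chi^{U,(m)} + \chi^{U,(m-1,1)}$ does not close up: it introduces the degree of a two-row shape, so you never stay inside single-row partitions. Knowing only the cuspidal degree $\deg(U)=(-1)^{s-1}(q;q)_{s-1}$ and the multiplicativity above is not enough to pin down the individual $\deg(\chi^{U,(m)})$; one needs further input. Two standard fixes: (i) use that $\End_{GL_{ms}}(U^{\hcprod m}) \cong \CC\Symm_m$, so that $\chi^{U,(m)}$ is the $\Symm_m$-trivial isotypic component of $U^{\hcprod m}$, whose dimension can be read off from the traces of the $\Symm_m$-intertwiners (each such trace is a parabolic induction along a coarser parabolic, computable from $\deg(U)$ alone); or (ii) invoke the Zelevinsky PSH isomorphism you mention at the end, under which the degree map restricted to the $U$-copy of $\Lambda$ is a ring homomorphism, hence a point specialization, and then identify that specialization from its values on the power sums rather than on $h_m$. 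Either route works, but the phrase ``induction on $m$'' does not.
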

\noindent
Here $s_\lambda(1,q,q^2,\ldots)$ is the {\it principal specialization}
$x_i=q^{i-1}$ of the Schur function $s_\lambda=s_\lambda(x_1,x_2,\ldots)$.  Observe that this formula 
depends only on $\lambda$ and $s$, and not on the choice of $U\in \CCC_s$.

Two special cases of this formula will be useful in the sequel.
\begin{itemize}
\item The case of hook-shapes:
\begin{equation}
\label{hook-degree-formula}
\deg( \chi^{U, \hook{k}{\frac{n}{s}}} )=
(-1)^{n-\frac{n}{s}} q^{s\binom{k+1}{2}} \frac{(q;q)_n}{(q^s;q^s)_{\frac{n}{s}}}
\qbin{\frac{n}{s}-1}{k}{q^s}.
\end{equation}
\item When $s=1$ and $U=\one$ 
is the trivial character of $GL_1(\FF_q)$, the degree is given by
the usual {\it $q$-hook formula} \cite[\S 7.21]{Stanley}
\begin{equation}
\label{q-hook-formula}
\deg( \chi^{\one,\lambda} )
=f^\lambda(q):=(q;q)_n \frac{q^{n(\lambda)}}{ \prod_{a \in \lambda} (1-q^{h(a)})}
=(q;q)_n s_\lambda(1,q,q^2,\ldots)
=\sum_Q q^{\maj(Q)}
\end{equation}
where the last sum is over all standard Young tableaux $Q$ of shape $\lambda$,
and $\maj(Q)$ is the sum of the entries $i$ in $Q$ for which $i+1$ lies
in a lower row of $Q$.  (Such characters are called \emph{unipotent characters}.)
\end{itemize}

\subsection{Character values on Singer cycles and regular elliptic elements}

Recall from the Introduction that a {\it Singer cycle} in $GL_n(\FF_q)$
is the image of a generator for the cyclic group
$\FF_{q^n}^\times \cong \ZZ/(q^n-1)\ZZ$ under any embedding
$
\FF_{q^n}^\times \hookrightarrow GL_{\FF_q}(\FF_{q^n}) \cong GL_n(\FF_q)
$
that comes from a choice of $\FF_q$-vector space isomorphism 
$\FF_{q^n} \cong \FF_q^n$.  (Such an embedded subgroup $\FF_{q^n}^\times$ is sometimes
called a {\it Coxeter torus} or an {\it anisotropic maximal torus}.)
Many irreducible $GL_n$-character values $\chi^{\llambda}(c^{-1})$ vanish not
only on Singer cycles, but even for a larger class of elements that we introduce in the following
proposition.

\begin{proposition}
\label{regular-elliptic-definition-proposition}
The following are equivalent for $g$ in $GL_n(\FF_q)$.
\begin{enumerate}
\item[(i)] No conjugates $hgh^{-1}$ of $g$ lie in a proper parabolic subgroup $P_\alpha
\subsetneq GL_n$.
\item[(ii)] There are no nonzero proper $g$-stable $\FF_q$-subspaces 
inside $\FF_q^n$.
\item[(iii)] The characteristic 
polynomial $\det(xI_n-g)$ is irreducible in $\FF_q[x]$.
\item[(iv)] The element $g$ is the image of some $\beta$ in $\FF_{q^n}^\times$ satisfying
$\FF_q(\beta)=\FF_{q^n}$ (that is, a {\it primitive element} for $\FF_{q^n}$)
under one of the embeddings
$
\FF_{q^n}^\times \hookrightarrow GL_{\FF_q}(\FF_{q^n}) \cong GL_n(\FF_q).
$
\end{enumerate}
The elements in $GL_n(\FF_q)$ satisfying these properties are called the {\bf regular elliptic
elements}.
\end{proposition}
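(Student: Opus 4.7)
The plan is to close the cycle (i)$\Rightarrow$(ii)$\Rightarrow$(iii)$\Rightarrow$(iv)$\Rightarrow$(i). Each implication is a routine translation between the four dictionaries involved---parabolic subgroups, $g$-stable subspaces, factorizations of the characteristic polynomial, and embeddings of $\FF_{q^n}^\times$---so there is no single hard step, only some careful bookkeeping in moving between the dictionaries.

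For (i)$\Leftrightarrow$(ii), I would use that each proper standard parabolic $P_\alpha$ is the stabilizer of a partial flag containing a proper nonzero subspace $W_0$. Thus $hgh^{-1}\in P_\alpha$ forces $g$ to stabilize $h^{-1}W_0$, negating (ii); conversely, given any proper nonzero $g$-stable $W\subset \FF_q^n$, extending an ordered basis of $W$ to one of $\FF_q^n$ supplies an $h$ placing $hgh^{-1}$ in block upper-triangular form of shape $(\dim W,\,n-\dim W)$.

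For (ii)$\Leftrightarrow$(iii), I would view $\FF_q^n$ as an $\FF_q[x]$-module via $x\cdot v := gv$, under which $g$-stable subspaces are exactly the $\FF_q[x]$-submodules. The structure theorem writes this module as $\bigoplus_i \FF_q[x]/(f_i)$ with invariant factors $f_1\mid f_2\mid\cdots$ satisfying $\prod_i f_i = \det(xI_n - g)$, and this module is simple iff there is a single invariant factor which is moreover irreducible---precisely the condition that $\det(xI_n - g)$ be irreducible of degree $n$. The only minor care is in the case of a single invariant factor $f_1 = p^k$ with $p$ irreducible and $k\ge 2$, where $\ker p(g)$ furnishes an explicit proper nonzero stable subspace.

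For (iii)$\Rightarrow$(iv), writing $f:=\det(xI_n - g)$ irreducible, Cayley--Hamilton forces the minimal polynomial to equal $f$, so $\FF_q[g]\cong \FF_q[x]/(f)$ is a field of order $q^n$, identifiable with $\FF_{q^n}$ by sending $g$ to some primitive $\beta$. A dimension count shows that $\FF_q^n$ is a $1$-dimensional $\FF_{q^n}$-vector space, so any $\FF_{q^n}$-basis gives an $\FF_q$-linear identification $\FF_q^n\cong \FF_{q^n}$ carrying the action of $g$ to multiplication by $\beta$. Finally, (iv)$\Rightarrow$(i) is immediate through (ii): any $g$-stable $\FF_q$-subspace of $\FF_{q^n}$ is closed under multiplication by every element of $\FF_q[\beta] = \FF_{q^n}$, hence is an $\FF_{q^n}$-subspace of a $1$-dimensional $\FF_{q^n}$-space, so is $0$ or all of $\FF_{q^n}$. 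The entire argument amounts to classical linear algebra over finite fields, and the main obstacle is simply keeping the identifications consistent.
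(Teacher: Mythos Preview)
Your proposal is correct and follows essentially the same route as the paper: the paper also proves (i)$\Leftrightarrow$(ii) via the parabolic/stable-subspace correspondence, (ii)$\Rightarrow$(iii) via the contrapositive (using $\ker f(g)$ for a proper irreducible factor $f$), (iii)$\Rightarrow$(iv), and (iv)$\Rightarrow$(ii) exactly as you do. The only cosmetic difference is in (iii)$\Rightarrow$(iv), where the paper argues via rational canonical form (both $g$ and the image of $\beta$ have companion matrix of $f$ as their canonical form, hence are conjugate) while you instead observe $\FF_q[g]\cong\FF_{q^n}$ and view $\FF_q^n$ as a one-dimensional $\FF_{q^n}$-space; these are interchangeable standard arguments.
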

\begin{proof}
\noindent
{\sf (i) is equivalent to (ii).}
A proper $\FF_q$-subspace $U$, say with $\dim_{\FF_q} U = d < n$, is $g$-stable if and only any 
$h$ in $GL_n(\FF_q)$ sending $U$ to the span of the first $d$ standard
basis vectors in $\FF_q^n$ has the property that $h g h^{-1}$ lies in a proper parabolic subgroup
$P_{\alpha}$ with $\alpha_1=d$.

%
\vskip.1in
\noindent
{\sf (ii) implies (iii).}  
Argue the contrapositive:  if $\det(xI_n-g)$ had a nonzero proper irreducible factor $f(x)$, 
then $\ker(f(g): V \rightarrow V)$ would be a 
nonzero proper $g$-stable subspace.
\vskip.1in
\noindent
{\sf (iii) implies (iv).}  If $f(x):=\det(xI_n-g)$ is irreducible in $\FF_q[x]$,
then $f(x)$ is also the minimal polynomial of $g$.  Thus
$g$ has rational canonical form over $\FF_q$ equal to 
the companion matrix for $f(x)$.  This is the same
as the rational canonical form for the image under
one of the above embeddings of any $\beta$ in $\FF_{q^n}^\times$ having 
minimal polynomial $f(x)$, so that $\FF_q(\beta) \cong \FF_{q^n}$.  
Hence $g$ is conjugate to the image of such an element $\beta$ embedded in 
$GL_n(\FF_q)$, and then equal to such an element, after conjugating the embedding.
\vskip.1in
\noindent
{\sf (iv) implies (ii).}  Assume that $g$ is the image of such an element 
$\beta$ in $\FF_{q^n}^\times$ satisfying $\FF_q(\beta)=\FF_{q^n}$.  Then
a $g$-stable $\FF_q$-subspace $W$ of $\FF_q^n$ would correspond
to a subset of $W \subset \FF_{q^n}$ stable under multiplication by
$\FF_q$ and by $\beta$, so also stable under multiplication 
by $\FF_q(\beta)=\FF_{q^n}$.  This could only be $W=\{0\}$ or $W=\FF_{q^n}$.
\end{proof}

Part (iv) of Proposition~\ref{regular-elliptic-definition-proposition}
shows that Singer cycles $c$ in $G$ 
are always regular elliptic, since
they correspond to elements $\gamma$ for which 
$\FF_{q^n}^\times = \langle \gamma \rangle$, that is, to {\it primitive roots} in
$\FF_{q^n}$.  

\begin{definition}
Recall that associated to the extension
$\FF_{q} \subset \FF_{q^n}$ is the {\it norm map}
$$
\begin{array}{rcl}
\FF_{q^n} &\overset{N_{\FF_{q^n}/\FF_{q}}}{\longrightarrow} & \FF_{q}\\
\beta & \longmapsto &
\beta \cdot \beta^q \cdot \beta^{q^2} \cdots \beta^{q^{n-1}}.
\end{array}
$$
\end{definition}
\noindent
The well-known surjectivity of norm maps for finite fields 
\cite[VII Exer.\ 28]{Lang} is equivalent to the following.

\begin{proposition}
\label{norm-map-preserves-Singer}
If $\FF_{q^n}^\times=\langle \gamma \rangle$,
then $\FF_{q}^\times=\langle N(\gamma) \rangle$. 
\end{proposition}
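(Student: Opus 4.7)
The plan is to do a direct order computation on the explicit formula for $N(\gamma)$. First I would rewrite the norm as an explicit power of $\gamma$: from the definition,
\[
N_{\FF_{q^n}/\FF_q}(\gamma) = \gamma^{1 + q + q^2 + \cdots + q^{n-1}} = \gamma^{[n]_q},
\]
where I'm using $[n]_q = (q^n-1)/(q-1)$, which is an integer dividing $q^n-1$.

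Next I would observe that, since $\gamma$ has multiplicative order exactly $q^n - 1$ in $\FF_{q^n}^\times$, the order of the power $\gamma^{[n]_q}$ is
\[
\frac{q^n - 1}{\gcd(q^n-1,\, [n]_q)} = \frac{q^n-1}{[n]_q} = q - 1,
\]
the simplification of the gcd being immediate from the fact that $[n]_q \mid q^n - 1$. It remains only to confirm that $N(\gamma)$ actually lies in $\FF_q^\times$, which is standard: applying Frobenius,
\[
N(\gamma)^q = \gamma^{q + q^2 + \cdots + q^{n-1} + q^n} = \gamma^{q + \cdots + q^{n-1}} \cdot \gamma = N(\gamma),
\]
using $\gamma^{q^n} = \gamma$. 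Hence $N(\gamma) \in \FF_q^\times$, and since $|\FF_q^\times| = q - 1$ equals the order of $N(\gamma)$, we conclude $\FF_q^\times = \langle N(\gamma) \rangle$.

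There is no real obstacle: the whole proof reduces to recognizing $1 + q + \cdots + q^{n-1}$ as the cofactor of $q - 1$ in $q^n - 1$, together with the elementary fact about orders of powers in cyclic groups. The only minor point to be careful about is confirming that $N(\gamma)$ is Frobenius-fixed, i.e., really lies in $\FF_q$, which the one-line Frobenius computation above handles.
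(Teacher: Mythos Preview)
Your proof is correct. The paper itself does not give a proof: it simply remarks that the proposition is equivalent to the well-known surjectivity of the norm map $N_{\FF_{q^n}/\FF_q}$ and cites Lang. Your direct order computation on $N(\gamma)=\gamma^{[n]_q}$ is precisely the standard argument for that surjectivity (the image of the cyclic group $\FF_{q^n}^\times$ under $x\mapsto x^{[n]_q}$ has order $(q^n-1)/[n]_q=q-1$), so your approach and the paper's citation amount to the same thing, with yours being fully self-contained rather than deferred to a reference.
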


\begin{proposition}
\label{Singer-cycle-character-values}
Let $g$ be a regular elliptic element in $GL_n(\FF_q)$ associated to $\beta \in \FF_{q^n}$, as in Proposition~\ref{regular-elliptic-definition-proposition}(iv).
\begin{enumerate}
\item[(i)]
The irreducible character $\chi^{\llambda}(g)$ vanishes unless
$\chi^{\llambda}$ is a primary irreducible character $\chi^{U,\lambda}$,
for some $s$ dividing $n$ and some cuspidal character $U$ in $\CCC_s$ and 
partition $\lambda$ in $\Par_{\frac{n}{s}}$.
\item[(ii)]
Furthermore, $\chi^{U,\lambda}(g)=0$ 
except for hook-shaped partitions 
$\lambda=\hook{k}{\frac{n}{s}}$.
\item[(iii)]
More explicitly, if $U$ in $\CCC_s$ is associated to $[\varphi]$
with $\varphi$ in $\Hom(\FF_{q^s}^\times,\CC^\times)$, then
$$
\begin{aligned}
\chi^{U,\hook{k}{\frac{n}{s}}}(g)
&=(-1)^{k} \chi^{U,(\frac{n}{s})}(g)\\ 
&=(-1)^{\frac{n}{s}-k-1} \chi^{U,(1^{\frac{n}{s}})}(g) \\
&=(-1)^{n-\frac{n}{s}-k} \sum_{j=0}^{s-1} 
  \varphi\left( N_{\FF_{q^n}/\FF_{q^s}}(\beta^{q^j}) \right).
\end{aligned}
$$
\item[(iv)]  If in addition $g$ is a Singer cycle then
$$
\sum_{U} \chi^{U,\hook{k}{\frac{n}{s}}}(g) = 
\begin{cases}
(-1)^{n-\frac{n}{s}-k} \mu(s) & \text{ if } q = 2,\\
0 & \text{ if } q \neq 2.
\end{cases}
$$
where the sum is over all $U$ in $\CCC_s$,
and $\mu(s)$ is the usual number-theoretic M\"obius function of $s$.
\end{enumerate}
\end{proposition}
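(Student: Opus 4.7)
My plan is to handle (i) and (ii) together via the parabolic-vanishing principle, then (iii) via Jacobi--Trudi reductions plus Green's explicit formulas, and finally (iv) by a M\"obius and orthogonality argument.

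For (i) and (ii), the shared tool is the induction formula \eqref{parabolic-induction-formula}: any parabolic induction $f_1 \hcprod \cdots \hcprod f_m$ from a proper parabolic $P_\alpha$ vanishes at $g$, because by Proposition~\ref{regular-elliptic-definition-proposition}(i) no conjugate of the regular elliptic $g$ meets $P_\alpha$. For (i), a non-primary $\chi^{\llambda}$ is by \eqref{general-irreducibles-are-induced} such an induction with at least two factors. For (ii), I would expand $\chi^{U,\lambda}$ via Jacobi--Trudi \eqref{primary-irreducible-Jacobi-Trudi} and note that each summand $\chi^{U,(m_1)}\hcprod\cdots\hcprod\chi^{U,(m_\ell)}$ is a parabolic induction that can contribute only when a unique $m_{i_0}$ equals $n/s$ and the rest vanish. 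Solving $m_i=\lambda_i-i+w(i)=0$ for $i\ne i_0$ forces $i_0=j$ for any row with $\lambda_j\ge 2$ (else $w(j)\le 0$); a non-hook has two such rows, giving incompatible constraints, so $\chi^{U,\lambda}(g)=0$ unless $\lambda$ is a hook.

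For (iii) with $\lambda=\hook{k}{n/s}$, the same $i_0$-analysis identifies a unique surviving permutation in \eqref{primary-irreducible-Jacobi-Trudi}: $w(1)=k+1$ and $w(j)=j-1$ for $j\ge 2$, a $(k+1)$-cycle of sign $(-1)^k$, yielding $\chi^{U,\hook{k}{n/s}}(g)=(-1)^k\chi^{U,(n/s)}(g)$. The second equality follows identically by applying the dual Jacobi--Trudi \eqref{primary-irreducible-dual-Jacobi-Trudi} to $\lambda$, where the unique surviving $w\in\Symm_{n/s-k}$ is an $(n/s-k)$-cycle of sign $(-1)^{n/s-k-1}$. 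The third equality is an explicit evaluation of $\chi^{U,(n/s)}(g)$ at a regular elliptic element, which I would extract from Green's character formulas \cite[Thm.~14]{Green}; the main subtlety is translating between Green's notation $I^{\beta}_s[n/s]$ and our $\chi^{U,(n/s)}$ and reconciling the sign $(-1)^{n-n/s}$. I expect this notational and sign bookkeeping to be the main technical obstacle of the proof.

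For (iv), assume $g$ is a Singer cycle, so that $\beta$ generates $\FF_{q^n}^\times$. A direct order computation shows that $\omega_j:=N_{\FF_{q^n}/\FF_{q^s}}(\beta^{q^j})=\beta^{q^j(q^n-1)/(q^s-1)}$ has order exactly $q^s-1$ and therefore generates $\FF_{q^s}^\times$, and that $\Omega:=\{\omega_0,\ldots,\omega_{s-1}\}$ is a single Frobenius orbit of size $s$. Since $\sum_{\omega\in\Omega}\varphi(\omega)$ depends only on the Frobenius orbit $[\varphi]$, the sum over cuspidals $U\in\CCC_s$ (indexed by free orbits of size $s$) becomes $\tfrac{1}{s}\sum_{\varphi\text{ free, size }s}\sum_\omega\varphi(\omega)$. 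M\"obius inversion on the divisor lattice of $s$ rewrites this as
\[
\frac{1}{s}\sum_{d\mid s}\mu(s/d)\sum_{\omega\in\Omega}\sum_{\varphi\in\widehat{\FF_{q^d}^\times}}\varphi(\omega),
\]
and by orthogonality the inner sum equals $q^d-1$ if $N_{\FF_{q^s}/\FF_{q^d}}(\omega)=1$ and $0$ otherwise. Since $\omega$ generates $\FF_{q^s}^\times$, its norm $\omega^{(q^s-1)/(q^d-1)}$ has order $q^d-1$, which equals $1$ only when $q=2$ and $d=1$. Thus only the $d=1$ term can survive, and only when $q=2$, giving $\tfrac{1}{s}\cdot\mu(s)\cdot 1\cdot s=\mu(s)$; combining with the sign $(-1)^{n-n/s-k}$ from (iii) yields the claimed dichotomy.
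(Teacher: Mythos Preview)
Your proposal is correct and follows essentially the same approach as the paper: parabolic vanishing for (i)--(ii), Jacobi--Trudi reduction to the single surviving term for the first two equalities in (iii), and M\"obius inversion for (iv). The only differences are that the paper cites Silberger--Zink \cite[Theorem 6.1]{SilbergerZink} for the explicit value in the third equality of (iii) rather than extracting it directly from Green, and in (iv) the paper phrases the M\"obius computation as a sum over roots of unity $\sum_{t\mid s}\mu(s/t)\sum_{z:\,z^{q^t-1}=1}z$ (which vanishes unless $q^t-1=1$) rather than via character orthogonality and norm maps as you do.
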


\begin{proof}
The key point is Proposition~\ref{regular-elliptic-definition-proposition}(i),
showing that regular elliptic elements $g$ are the elements whose 
conjugates $hgh^{-1}$ lie in no proper parabolic subgroup $P_\alpha$. 
Hence the parabolic induction formula \eqref{parabolic-induction-formula} shows that
any properly induced character $f_1 \hcprod \cdots \hcprod f_m$ will vanish on a regular
elliptic element $g$.

Assertion (i) follows immediately, as 
\eqref{general-irreducibles-are-induced} shows non-primary irreducibles are
properly induced.

Assertion (ii) also follows, as a non-hook partition $\lambda=(\lambda_1 \geq \lambda_2 \geq
\cdots)$ has
$\lambda_2 \geq 2$, so that in the Jacobi-Trudi-style formula
\eqref{primary-irreducible-Jacobi-Trudi} for 
$\chi^{U,\lambda}$, each term 
$$
\chi^{U,(\lambda_1-1+w(1))} \hcprod 
\chi^{U,(\lambda_2-2+w(2))} \hcprod \cdots \hcprod
\chi^{U,(\lambda_\ell-\ell+w(\ell))}
$$
begins with two nontrivial product factors, so it
is properly induced, and vanishes on regular elliptic $g$.

The first two equalities asserted in (iii) follow from similar analysis of  
terms in \eqref{primary-irreducible-Jacobi-Trudi},
\eqref{primary-irreducible-dual-Jacobi-Trudi} 
for $\chi^{U,\lambda}$ when $\lambda=\hook{k}{\frac{n}{s}}$.  These 
formulas have $2^{k+1}$ and $2^{\frac{n}{s}-k}$ nonvanishing terms, 
respectively, of the form
$$
\begin{array}{rcl}
&(-1)^{k-m} &
\chi^{U,(\alpha_1)}  \hcprod 
\chi^{U,(\alpha_2)}  \hcprod 
\cdots \hcprod
\chi^{U,(\alpha_m)}   \\
&(-1)^{\frac{n}{s}-k-m}& 
\chi^{U,(1^{\beta_1})}  \hcprod 
\chi^{U,(1^{\beta_2})}  \hcprod 
\cdots \hcprod
\chi^{U,(1^{\beta_m})}   
\end{array}
$$
corresponding to compositions $(\alpha_1,\ldots,\alpha_m)$ 
and $(\beta_1,\ldots,\beta_m)$ of $\frac{n}{s}$
with $\alpha_1 \geq k+1$ and $\beta_1 \geq \frac{n}{s}-k$, respectively.
All such terms vanish on regular elliptic $g$, 
being properly induced, except the $m=1$ terms:
$$
\begin{aligned}
\chi^{U,\hook{k}{\frac{n}{s}}}(g)
&=(-1)^{k-1} \chi^{U,(\frac{n}{s})}(g)\\ 
&=(-1)^{\frac{n}{s}-k-1} \chi^{U,(1^{\frac{n}{s}})}(g).
\end{aligned}
$$
The last equality in (iii) comes from a result of 
Silberger and Zink \cite[Theorem 6.1]{SilbergerZink},
which they deduced by combining various formulas from Green \cite{Green}.

For assertion (iv), say that the regular elliptic element
$g$ corresponds to an element $\beta$ in $\FF_{q^n}$ under the 
embedding $\FF_{q^n}^\times \hookrightarrow GL_n(\FF_q)$, and let
$\gamma:=N_{\FF_{q^n}/\FF_{q^s}}(\beta)$ be its norm in 
$\FF_{q^s}^\times$. 
Assertion (iii) and
the multiplicative property of the norm map $N_{\FF_{q^n}/\FF_{q^s}}$ imply
\begin{equation}
\label{norm-orbit-Singer-sum}
\sum_{U} \chi^{U,\hook{k}{\frac{n}{s}}}(g) 
=(-1)^{n-\frac{n}{s}-k} 
\sum_U \sum_{j=0}^{s-1} 
  \varphi\left( \gamma^{q^j} \right) \\
=(-1)^{n-\frac{n}{s}-k} 
\sum_U \sum_{\gamma'} \varphi(\gamma')
\end{equation}
where the inner sum runs over all $\gamma'$ lying in the Frobenius
orbit of $\gamma$ within $\FF_{q^s}$.  When
 one further assumes that $g$ is a Singer cycle,
then Proposition~\ref{norm-map-preserves-Singer} implies
$\FF_{q^s}^\times = \langle \gamma \rangle$, so that a homomorphism
$\varphi: \FF_{q^s}^\times \rightarrow \CC^\times$ is completely determined
by its value $z:=\varphi(\gamma)$ in $\CC^\times$.
Furthermore, $\varphi$ will have a free Frobenius orbit if
and only if the powers $\{z, z^q, z^{q^2},\cdots,z^{q^{s-1}}\}$ 
are {\it distinct} roots of unity.  
Thus one can rewrite the rightmost summation
$\sum_U \sum_{\gamma'} \varphi(\gamma')$ in \eqref{norm-orbit-Singer-sum} above
as the sum over all $z$ in $\CC^\times$ for which $z^{q^s-1}=1$
but $z^{q^t-1} \neq 1$ for any proper divisor $t$ of $s$.
Number-theoretic M\"obius inversion shows this is
$
\sum_{t | s} \mu\left( \frac{s}{t} \right)
f(t)
$
where 
\[
f(t):=\sum_{\substack{z \in \CC^\times:\\ z^{q^t-1}=1}} z
=\begin{cases}
1 & \text{ if } q = 2, t=1,\\
0 & \text{ if } q \neq 2 \text{ or }t \neq 1.
\end{cases}
\]
Hence 
\begin{equation}
\label{moebius-function-equality}
\sum_U \sum_{\gamma'} \varphi(\gamma')=
\begin{cases}
\mu(s) & \text{ if } q = 2,\\
0 & \text{ if } q \neq 2.          
\end{cases}
\qedhere
\end{equation}
\end{proof}

\subsection{Character values on semisimple reflections}
\label{semisimple-reflection-character-section}

Recall that a semisimple reflection $t$ in $GL_n(\FF_q)$ has
conjugacy class determined by its non-unit eigenvalue $\det(t)$, lying in $\FF_q^\times \setminus
\{1\}$.  Recall also the notion of the {\it content} $c(a):=j-i$ of a cell $a$ lying in row $i$ and
column $j$ of
the Ferrers diagram for a partition $\lambda$.

\begin{lemma}
\label{normalized-characters-on-semisimple-reflections}
Let $t$ be a semisimple reflection in 
$GL_n(\FF_q)$.

\begin{enumerate}
\item[(i)] Primary irreducible characters $\chi^{U,\lambda}$ vanish
on $t$ unless $\wt(U)=1$, that is, unless $U$ is in $\CCC_1$.
\item[(ii)] For $U$ in $\CCC_1$, so  
$
\FF_q^\times \overset{U}{\rightarrow} \CC^\times,
$
and $\lambda$ in $\Par_n$, the normalized character $\normchi^{U,\lambda}$ has value on  $t$
$$
\normchi^{U,\lambda}(t)
= U(\det(t)) \cdot
    \frac{\sum_{a \in \lambda} q^{c(a)}}{[n]_q}.
$$
\item[(iii)] In particular, for $U$ in $\CCC_1$ and hook shapes $\lambda=(n-k,1^k)$,
this simplifies to 
$$
\normchi^{U,(n-k,1^k)}(t)=U(\det(t)) \cdot q^{-k}.
$$ 

\end{enumerate}
\end{lemma}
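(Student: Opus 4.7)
The proof splits naturally into the three parts of the lemma.

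For part (i), I would use Proposition~\ref{primaries-in-terms-of-cuspidals} to rewrite $\chi^{U,\lambda}$ (with $U \in \CCC_s$, $s \geq 2$) as a $\QQ$-linear combination of parabolic induction products $\chi_{U_1} \hcprod \cdots \hcprod \chi_{U_t}$ in which each $U_i \in \CCC_{n_i}$ has weight divisible by $s$, hence $n_i \geq 2$. It then suffices to show each such product vanishes on a semisimple reflection $t$. The key input is that \emph{any} cuspidal character of $GL_m(\FF_q)$ with $m \geq 2$ vanishes on semisimple reflections. I would establish this by induction on $m$ via the parabolic induction formula~\eqref{parabolic-induction-formula}: whenever $h t h^{-1}$ lies in $P_\alpha$ with all $n_i \geq 2$, the product of characteristic polynomials of the diagonal blocks must equal $(x-1)^{n-1}(x - \det(t))$, forcing one block $A_{j,j}$ to be a semisimple reflection in $GL_{n_j}$ (with the others unipotent), and the inductive hypothesis kills $\chi_{U_j}(A_{j,j})$. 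The base case $m=2$ follows from the explicit description of cuspidal characters of $GL_2(\FF_q)$, which are supported away from the split torus (which contains all semisimple reflections).

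For part (ii), I would first establish the factorization
\[
\chi^{U, \lambda}(g) = U(\det g) \cdot \chi^{\one, \lambda}(g) \qquad \text{for all } g \in GL_n(\FF_q).
\]
This uses that $\chi^{U,(m)}$ is one-dimensional---its degree is $1$ by~\eqref{hook-degree-formula} at $s=1, k=0$---and that any one-dimensional character of $GL_m$ factors through $\det$, giving $\chi^{U,(m)} = U \circ \det$. Substituting into the Jacobi--Trudi formula~\eqref{primary-irreducible-Jacobi-Trudi} and using that $U \circ \det$ is multiplicative across block-diagonal matrices yields the factorization. It then remains to compute $\normchi^{\one,\lambda}(t) = \sum_{a \in \lambda} q^{c(a)}/[n]_q$. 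My approach would expand $\chi^{\one,\lambda}$ via~\eqref{primary-irreducible-Jacobi-Trudi} as a signed sum of permutation characters $\Ind_{P_\alpha}^{GL_n}(\one)$, then evaluate each on $t$ by counting $t$-fixed partial flags of type $\alpha$---a tractable $q$-binomial count, since $t$ has a codimension-one fixed subspace and a complementary $\det(t)$-eigenline---and reorganize the resulting alternating sum into the claimed Schur-content sum. A cleaner alternative would recognize $\sum_{a \in \lambda} q^{c(a)}$ as the eigenvalue of the sum of Jucys--Murphy elements in the Iwahori--Hecke algebra $H_n(q)$ acting on the Specht module $S^\lambda$, combined with the Iwahori correspondence between $B$-double cosets and Hecke generators.

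Part (iii) is then a direct content computation: for $\lambda = (n-k, 1^k)$, the contents along the first row are $0, 1, \ldots, n-k-1$ and those in the first column below the top cell are $-1, -2, \ldots, -k$, so
\[
\sum_{a \in \lambda} q^{c(a)} = [n-k]_q + q^{-k}[k]_q = \frac{q^{n-k}-1+1-q^{-k}}{q-1} = q^{-k}[n]_q,
\]
which divided by $[n]_q$ yields $q^{-k}$, establishing (iii).

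The main obstacle is the evaluation in part (ii): while the factorization $\chi^{U,\lambda} = (U\circ\det) \cdot \chi^{\one,\lambda}$ is routine, computing $\chi^{\one, \lambda}$ on a semisimple reflection is the $q$-analog of the classical Frobenius formula $\normchi^\lambda(\text{transposition}) = \sum_a c(a)/\binom{n}{2}$ in $\Symm_n$, and extracting the clean closed form requires either a careful $q$-binomial reorganization of the fixed-flag counts or importing the Jucys--Murphy machinery for $H_n(q)$.
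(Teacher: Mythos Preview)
Your overall architecture matches the paper's: for (i), reduce via Proposition~\ref{primaries-in-terms-of-cuspidals} and the parabolic induction formula to the claim that cuspidal characters of $GL_m$ with $m \geq 2$ vanish on semisimple reflections; for (ii), reduce to $U = \one$ via $\chi^{U,\lambda} = (U \circ \det) \cdot \chi^{\one,\lambda}$ and then count $t$-stable flags; and (iii) is the same content computation in both.

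There is, however, a genuine gap in your argument for (i). Your proposed ``induction on $m$ via the parabolic induction formula'' does not close: formula~\eqref{parabolic-induction-formula} lets you evaluate a \emph{product} $\chi_{U_1} \hcprod \cdots \hcprod \chi_{U_t}$ in terms of cuspidal values on smaller diagonal blocks, but it gives you no purchase on a single cuspidal character $\chi_U$ of $GL_m$, which by definition is not parabolically induced. So beyond your base case $m=2$ there is no inductive step. Concretely, when $s = n$ and $\lambda = (1)$ the primary irreducible $\chi^{U,(1)}$ \emph{is} the cuspidal $U$ itself, and your argument says nothing about it. The paper fills this gap differently: it cites Green's result \cite[\S 5, Example (ii)]{Green} that cuspidal characters of $GL_m$ vanish on all \emph{non-primary} conjugacy classes (those whose characteristic polynomial has at least two distinct irreducible factors), and then observes that a semisimple reflection $t$ is non-primary because $\det(xI - t)$ is divisible by both $x-1$ and $x - \det(t)$ with $\det(t) \neq 1$. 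Once you have that input, the rest of your reduction via products is exactly the paper's argument.

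For (ii), your flag-counting approach is precisely what the paper does. The paper makes your ``reorganize the alternating sum'' step precise by packaging the fixed-flag count as a linear map on symmetric functions, $f \mapsto (q;q)_{n-1} \cdot (\partial f/\partial p_1)(1,q,q^2,\ldots)$, so that $\chi^{\one,\lambda}(t)$ is this map applied to $s_\lambda$. Using $\partial s_\lambda/\partial p_1 = \sum_{\mu} s_\mu$ (sum over $\mu$ obtained from $\lambda$ by removing a corner), the claim reduces to the $q$-hook-walk identity
\[
\sum_{\mu} \frac{f^\mu(q)}{f^\lambda(q)} = \frac{1}{[n]_q}\sum_{a \in \lambda} q^{c(a)},
\]
which the paper does not prove from scratch but cites from Kerov and from Garsia--Haiman. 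Your Jucys--Murphy alternative for the Hecke algebra is plausible and is in fact raised by the paper itself as an open direction in a later section, but it is not the route the paper takes.
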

\begin{proof}
For assertion (i), we start with the fact proven by 
Green \cite[\S 5 Example (ii), p. 430]{Green} that cuspidal characters for $GL_n$ 
vanish on {\it non-primary} conjugacy classes, that is,
those for which the characteristic polynomial is divisible by at least
two distinct irreducible polynomials in $\FF_q[x]$.  

This implies cuspidal characters
for $GL_n$ with $n \geq 2$ vanish on semisimple reflections $t$, since
such $t$ are non-primary:  $\det(xI-t)$ is divisible
by both $x-1$ and $x-\alpha$ where $\alpha=\det(t) \neq 1$.

Next, the parabolic induction formula \eqref{parabolic-induction-formula}
shows that any character of the form $\chi_{U_1} \hcprod \cdots \hcprod \chi_{U_\ell}$
in which each $U_i$ is a $GL_{n_i}$-cuspidal with $n_i \geq 2$ will also vanish
on all semisimple reflections $t$:  whenever $hth^{-1}$ lies in the parabolic
$P_{(n_1,\ldots,n_\ell)}$ and has diagonal blocks $(g_1,\ldots,g_\ell)$, one
of the $g_{i_0}$ is also a semisimple reflection, so that 
$\chi_{U_{i_0}}(g_{i_0})=0$ by the above discussion.

Lastly, Lemma~\ref{primaries-in-terms-of-cuspidals} shows that every 
primary irreducible $\chi^{U,\lambda}$ with $\wt(U) \geq 2$ will vanish
on every semisimple reflection:  $\chi^{U,\lambda}$ is in the $\QQ$-span
of characters $\chi_{U_1} \hcprod \cdots \hcprod \chi_{U_\ell}$ with each $U_i$ a
$GL_{n_i}$-cuspidal in which $\wt(U)$ divides $n_i$, so that $n_i \geq 2$.

Assertion (iii) is an easy calculation using assertion (ii), so it only remains to prove (ii).
We first claim that one can reduce to the case where character $U$ in $\CCC_1$ is the
trivial character $\FF_q^\times \overset{U=\one}{\longrightarrow} \CC^\times$.  This is
because one has
$
\chi^{U,(n)} = U = U \otimes \chi^{\one,(n)}
$
and hence using \eqref{primary-irreducible-Jacobi-Trudi} one has
\begin{equation}
\label{weight-one-cuspidal-tensor-product}
\chi^{U,\lambda} =  U \otimes \chi^{\one,\lambda} 
\qquad\text{ for }\lambda\text{ in }\Par_n\text{ when }U\text{ lies in }\CCC_1.
\end{equation}

Thus without loss of generality, $U=\one$, and we wish to show
\begin{equation}
\label{semisimple-refn-character-value-at-unipotent}
\normchi^{\one,\lambda}(t) 
= \frac{1}{[n]_q} \sum_{a \in \lambda} q^{c(a)}.
\end{equation}

\begin{lemma}
\label{semisimple-reflection-character-skews}
A semisimple reflection $t$ has 
$
\chi^{\one,\lambda}(t) = \Psi( s_\lambda )
$
where $\Psi$ is the linear map on the symmetric functions $\Lambda=\QQ[p_1,p_2,\ldots]$ 
expressed in terms of power sums that sends 
$
f(x_1,x_2,\ldots)  \mapsto  (q;q)_{n-1} \frac{\partial f}{\partial p_1}(1,q,q^2,\ldots).
$
\end{lemma}
\begin{proof}[Proof of Lemma~\ref{semisimple-reflection-character-skews}]
By linearity and \eqref{primary-irreducible-Jacobi-Trudi},
it suffices to check for compositions $\alpha=(\alpha_1,\ldots,\alpha_m)$ of $n$ that 
$
\chi^{\one,\alpha}:=\chi^{\one,(\alpha_1)} \hcprod \cdots \hcprod \chi^{\one,(\alpha_m)} 
$
has $\chi^{\one,\alpha}(t) = \Psi (h_\alpha)$
where $h_\alpha = h_{\alpha_1} \cdots h_{\alpha_m}$.  
The character $\chi^{\one,\alpha}$ 
is just the usual induced character 
$\Ind_{P_\alpha}^{GL_n} \one_{P_\alpha}$, so the
permutation character on the set of 
{\it $\alpha$-flags of subspaces}
$$
\{0\} \subset V_{\alpha_1} \subset V_{\alpha_1+\alpha_2} \subset \cdots \subset
V_{\alpha_1+\alpha_2+\cdots+\alpha_{m-1}} \subset \FF_q^n,
$$ 
which are counted by the $q$-multinomial coefficient
$$
\qbin{n}{\alpha}{q}:=\qbin{n}{\alpha_1,\ldots,\alpha_m}{q} 
= \frac{[n]!_q}{[\alpha_1]!_q \cdots [\alpha_m]!_q}
=(q;q)_n h_\alpha(1,q,q^2,\ldots).
$$
Thus $\chi^{\one,\alpha}(t)$ counts the number of such flags stabilized by the semisimple
reflection $t$.  To count these let $H$ and $L$ denote, respectively, the fixed hyperplane for $t$
and the line which is the $\det(t)$-eigenspace for $t$.  Then one can classify the
$\alpha$-flags stabilized by $t$ according to the smallest index $i$
for which $L \subset V_{\alpha_1+\cdots+\alpha_i}$.
Fixing this index $i$, such flags must have their first $i-1$ subspaces 
$V_{\alpha_1}, V_{\alpha_1+\alpha_2},\ldots,V_{\alpha_1+\cdots+\alpha_{i-1}}$
lying inside $H$, and 
their remaining subspaces from $V_{\alpha_1+\cdots+\alpha_i}$ onward 
containing $L$.  From this description it is not hard to see that the quotient map
$\FF_q^n \twoheadrightarrow \FF_q^n/L$
is a bijection between such $t$-stable $\alpha$-flags and the 
$(\alpha-e_i)$-flags in $\FF_q^n/L \cong \FF_q^{n-1}$, where
$
\alpha-e_i:=(\alpha_1,\ldots,\alpha_{i-1},
             \alpha_i-1,\alpha_{i+1},\ldots,\alpha_{m}).
$
Consequently
$$
\begin{aligned}
\chi^{\one,\alpha}(t) = \sum_{i=1}^m \qbin{n-1}{\alpha-e_i}{q} 
=(q;q)_{n-1} \sum_{i=1}^m  h_{\alpha-e_i}(1,q,q^2,\ldots) 
= (q;q)_{n-1} \frac{\partial h_{\alpha}}{\partial p_1}(1,q,q^2,\ldots)
=\Psi(h_\alpha) 
\end{aligned}
$$
using the fact \cite[Example I.5.3]{Macdonald}
that $\frac{\partial h_n}{\partial p_1}=h_{n-1}$,
and hence $\frac{\partial h_{\alpha}}{\partial p_1} = \sum_{i=1}^m h_{\alpha-e_i}$
via the Leibniz rule.
\end{proof}

Resuming the proof of \eqref{semisimple-refn-character-value-at-unipotent}, since
\cite[Example I.5.3]{Macdonald} shows
$
\partial s_\lambda/\partial p_1
  = \sum_{\mu \subset \lambda:\\ |\mu|=|\lambda|-1} s_\mu,
$
one concludes from Lemma~\ref{semisimple-reflection-character-skews} 
and \eqref{q-hook-formula} that
$$
\normchi^{\one, \lambda}(t) 
= \frac{\chi^{\one, \lambda}(t)}{\deg(\chi^{\one,\lambda})}
= \sum_{\substack{\mu \subset \lambda:\\ |\mu|=|\lambda|-1}} 
\frac{(q;q)_{n-1} s_\mu(1,q,q^2,\ldots)}{(q;q)_n s_\lambda(1,q,q^2,\ldots)} \\
= \sum_{\substack{\mu \subset \lambda:\\ |\mu|=|\lambda|-1}}
\frac{ f^{\mu}(q) }{ f^\lambda(q) }
$$
where $f^\lambda(q)$ is the $q$-hook formula from \eqref{q-hook-formula}.
Thus the desired equation \eqref{semisimple-refn-character-value-at-unipotent}
becomes the assertion
\begin{equation}
\label{q-hook-walk-formula}
\sum_{\substack{\mu \subset \lambda: \\ |\mu|=|\lambda|-1}} 
    \frac{ f^{\mu}(q) }{ f^\lambda(q) } 
= \frac{1}{[n]_q}\sum_{a \in \lambda} q^{c(a)}
\end{equation}
which follows from either of two results in the literature:
\eqref{q-hook-walk-formula}
is equivalent\footnote{In checking this equivalence, it is useful to bear in mind that 
$f^{\lambda^t}(q) = q^{\binom{n}{2}} f^{\lambda}(q^{-1})$, along with the fact that
if $\mu \subset \lambda$ with $|\mu|=|\lambda|-1$ and the unique cell of $\lambda/\mu$ lies in
row $i$ and column $j$,
then $n(\lambda)-n(\mu)=i-1$ and $n(\lambda^t)-n(\mu^t)=j-1$.}, 
after sending $q \mapsto q^{-1}$, to a result of Kerov \cite[Theorem 1 and Eqn.\ (2.2)]{Kerov},
and \eqref{q-hook-walk-formula}
 is also the $t=q^{-1}$ specialization of a result of
Garsia and Haiman \cite[(I.15), Theorem 2.3]{GarsiaHaiman}.
\end{proof}

\subsection{Character values on transvections}
\label{transvection-character-section}

The $GL_n$-irreducible character values on transvections appear in probabilistic
work of M.~Hildebrand \cite{Hildebrand}.  For primary irreducible characters, his result is
equivalent\footnote{In seeing this equivalence, note that Hildebrand uses Macdonald's indexing
\cite[p. 278]{Macdonald} of $GL_n$-irreducibles, where partition values are transposed in the
functions 
$\llambda: \CCC \longrightarrow \Par$ relative to our convention in
Sections~\ref{irreducible-parametrization-section} 
and \ref{Jacobi-Trudi-section}.}
to the following.

\begin{theorem}[{\cite[Theorem 2.1]{Hildebrand}}]
\label{Hildebrand's-calculation}
For $U$ in $\CCC_s$ with $\lambda$ in $\Par_{\frac{n}{s}}$, a transvection $t$ in
$GL_n(\FF_q)$
has 
$$
\normchi^{U,\lambda}(t)= 
\begin{cases}
\displaystyle
\frac{1}{1 - q^{n-1}}
\left( 1 - q^{n-1}
  \sum_{\substack{\mu \subset \lambda: \\ |\mu|=|\lambda|-1}} 
     \frac{ f^{\mu}(q) }{ f^\lambda(q) } 
 \right)
  & \text{ if }s=1,\\
\displaystyle
\frac{1}{1 - q^{n-1}} 
  & \text{ if } s \geq 2.
\end{cases}
$$
\end{theorem}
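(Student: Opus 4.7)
The plan is to deduce this theorem from \cite[Theorem 2.1]{Hildebrand} by reconciling the two indexing conventions for $GL_n$-irreducibles.

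First I would write down Hildebrand's formula in his original notation, which follows Macdonald \cite[p.~278]{Macdonald}. As flagged in the footnote preceding the theorem statement, Macdonald's parameter function $\llambda:\CCC \to \Par$ assigns to each cuspidal $U$ the transpose of the partition that our indexing assigns; equivalently, Hildebrand's ``$\chi^{U,\lambda}$'' is our $\chi^{U,\lambda'}$. Under this dictionary the $s \geq 2$ assertion translates immediately, because Hildebrand's value $\frac{1}{1-q^{n-1}}$ does not depend on $\lambda$.

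For the $s = 1$ piece, I would substitute $\lambda \mapsto \lambda'$ in Hildebrand's formula and then rewrite the resulting sum in our notation. Hildebrand's sum naturally runs over cocovers $\nu \subset \lambda'$ with $|\nu|=|\lambda'|-1$ and features ratios $f^{\nu}(q)/f^{\lambda'}(q)$. Using the bijection $\nu \leftrightarrow \nu'$ between cocovers of $\lambda'$ and cocovers of $\lambda$, together with the identity $f^{\lambda'}(q) = q^{\binom{n}{2}} f^{\lambda}(q^{-1})$ recalled in the footnote to \eqref{q-hook-walk-formula}, these ratios can be rewritten as $f^{\mu}(q)/f^{\lambda}(q)$, matching the right-hand side of our statement.

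As an alternative sanity check for the $s=1$ case, one can proceed directly in parallel to the proof of Lemma~\ref{semisimple-reflection-character-skews}. By \eqref{weight-one-cuspidal-tensor-product} and the fact that $U(\det(t))=U(1)=1$ for any transvection $t$, the problem reduces to $U=\one$, and there one counts $t$-stable $\alpha$-flags using the characterization that a subspace $V$ is $t$-stable if and only if $V \subset H$ or $L \subset V$ (where $H$ is the fixed hyperplane and $L$ the direction line, with $L \subset H$). The main obstacle is bookkeeping: in the translation from Hildebrand one has to track $q \leftrightarrow q^{-1}$ interacting with the $q^{\binom{n}{2}}$ factor and with the $1-q^{n-1}$ denominators, while in the direct flag-counting approach the inclusion $L \subset H$ (not available in the semisimple reflection case of Lemma~\ref{semisimple-reflection-character-skews}) forces an inclusion-exclusion to correctly stratify the $t$-stable flags by the least index at which $L$ enters.
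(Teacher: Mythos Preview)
Your approach is exactly the paper's: the theorem is quoted from \cite[Theorem~2.1]{Hildebrand} without proof, and the only work the paper does is the footnote flagging the transpose convention. Your plan to make that translation explicit is the right elaboration.

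One caution on your second paragraph: the claim that the ratios $f^{\nu}(q)/f^{\lambda'}(q)$ rewrite directly as $f^{\mu}(q)/f^{\lambda}(q)$ via $f^{\lambda'}(q) = q^{\binom{n}{2}} f^{\lambda}(q^{-1})$ is not quite right as stated. That identity (applied at sizes $n$ and $n-1$) gives
\[
\frac{f^{\nu}(q)}{f^{\lambda'}(q)} = q^{-(n-1)} \frac{f^{\mu}(q^{-1})}{f^{\lambda}(q^{-1})},
\]
so both a stray power of $q$ and a $q \mapsto q^{-1}$ substitution remain; the individual ratios do not simply become $f^{\mu}(q)/f^{\lambda}(q)$. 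You correctly flag this bookkeeping in your third paragraph, but to actually complete the translation you will need to look at Hildebrand's precise formulation and track how these factors interact with the $q^{n-1}$ and $1-q^{n-1}$ already present, rather than assume his statement is the paper's with $\lambda$ replaced by $\lambda'$.
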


\noindent
One can rephrase the $s=1$ case similarly to 
Lemma~\ref{normalized-characters-on-semisimple-reflections}(ii).

\begin{corollary}
\label{normalized-characters-on-transvections}
For $U$ in $\CCC_1$ with $\lambda$ in $\Par_{n}$, 
a transvection $t$ in $GL_n(\FF_q)$ has 
$$
\normchi^{U,\lambda}(t)
= \frac{
\displaystyle
1 - q^{n-1} \left( \frac{ \sum_{a \in \lambda} q^{c(a)} }{[n]_q} \right)
}
{1 - q^{n-1}}.
$$
In particular, for $U$ in $\CCC_1$ and $0 \leq k \leq n-1$, one has
$$
\normchi^{U,\hook{k}{n}}(t)=
\frac{1 - q^{n-k-1}}{1 - q^{n-1}}.
$$
\end{corollary}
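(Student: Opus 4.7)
The plan is to combine Theorem~\ref{Hildebrand's-calculation} with the identity \eqref{q-hook-walk-formula} proven in the course of establishing Lemma~\ref{normalized-characters-on-semisimple-reflections}, and then specialize to hook shapes by an elementary content calculation.

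For the first assertion, I would start with the $s=1$ case of Hildebrand's formula, which says
\[
\normchi^{U,\lambda}(t) = \frac{1}{1-q^{n-1}}\left(1 - q^{n-1} \sum_{\substack{\mu \subset \lambda:\\ |\mu|=|\lambda|-1}} \frac{f^\mu(q)}{f^\lambda(q)}\right).
\]
Applying \eqref{q-hook-walk-formula} to replace the ratio-sum by $\frac{1}{[n]_q}\sum_{a\in\lambda} q^{c(a)}$ yields the stated expression immediately. Note that the result is independent of the particular $U \in \CCC_1$; this is consistent with the tensor-product identity \eqref{weight-one-cuspidal-tensor-product} and the fact that $\det(t)=1$ for a transvection, so $U(\det(t))=1$.

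For the hook-shape specialization, I would just compute the content sum directly. The cells of $\lambda = \hook{k}{n} = (n-k,1^k)$ are the $n-k$ cells of row $1$, with contents $0,1,\ldots,n-k-1$, together with the $k$ cells strictly below $(1,1)$, with contents $-1,-2,\ldots,-k$. Thus
\[
\sum_{a \in \lambda} q^{c(a)} = (1+q+\cdots+q^{n-k-1}) + (q^{-1}+q^{-2}+\cdots+q^{-k}),
\]
and multiplying through by $q^k$ gives $q^k \sum_{a\in\lambda} q^{c(a)} = 1+q+\cdots+q^{n-1} = [n]_q$. Hence $\frac{\sum_{a\in\lambda} q^{c(a)}}{[n]_q} = q^{-k}$, and substitution into the first formula produces
\[
\normchi^{U,\hook{k}{n}}(t) = \frac{1 - q^{n-1} \cdot q^{-k}}{1-q^{n-1}} = \frac{1-q^{n-k-1}}{1-q^{n-1}},
\]
as desired.

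There is no real obstacle here: given Hildebrand's theorem and \eqref{q-hook-walk-formula}, the corollary reduces to bookkeeping about contents of a hook, which is a one-line computation. The only thing worth remarking on is that both identities being combined were non-trivial in their own right — Hildebrand's character formula, and the equivalence of \eqref{q-hook-walk-formula} to the results of Kerov and of Garsia--Haiman — but both are already at our disposal by this point in the paper.
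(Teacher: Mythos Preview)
Your proposal is correct and follows exactly the paper's own argument: apply the $s=1$ case of Theorem~\ref{Hildebrand's-calculation}, substitute \eqref{q-hook-walk-formula}, and then for hook shapes compute $\sum_{a \in \hook{k}{n}} q^{c(a)} = q^{-k}[n]_q$. The paper's proof is just a terser version of what you wrote.
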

\begin{proof}
The first assertion follows from Theorem~\ref{Hildebrand's-calculation} using
\eqref{q-hook-walk-formula}, and the second from the calculation
\[
\sum_{a \in \hook{k}{n}} q^{c(a)} = q^{-k} + q^{-k+1} + \cdots + q^{n-k-1}
= q^{-k} [n]_q. \qedhere
\]
\end{proof}

\section{Proofs of Theorems~\ref{q-factorization-theorem} and
\ref{fixed-det-sequence-theorem}.}
\label{main-result-proof-section}

In proving the main results Theorems~\ref{q-factorization-theorem} and
\ref{fixed-det-sequence-theorem}, it is convenient to know the equivalences between the various
formulas that they assert.
After checking this in Proposition~\ref{q-Jackson-sum-equals-tot-num-lemma},
we assemble the normalized character values on reflection conjugacy class sums, 
in the form needed to apply
\eqref{Chapuy-Stump-varying-class-answer}.  This is then used to prove
Theorem~\ref{fixed-det-sequence-theorem} for $q>2$, from which we derive
Theorem~\ref{q-factorization-theorem} for $q>2$.  Lastly we prove
Theorem~\ref{q-factorization-theorem} for $q=2$.

\subsection{Equivalences of the formulas}
We will frequently use the easy calculation 
\begin{equation}
\label{q-difference-on-powers}
\left[\Delta_q^{N} \left( x^A \right) \right]_{x=1}
= \frac{(q^{A-N+1};q)_N}{(1-q)^N} 
\end{equation}
which can be obtained by iterating $\Delta_q$, or
via \eqref{q-diff-iterate} and the {\it $q$-binomial theorem}
\cite[p.\ 25, Exer.\ 1.2(vi)]{GasperRahman} 
\begin{equation}
\label{q-binomial-theorem}
(x;q)_N=\sum_{k=0}^N (-x)^k q^{\binom{k}{2}}\qbin{N}{k}{q}.
\end{equation}

The following assertion was promised in the Introduction.

\begin{proposition}
\label{q-Jackson-sum-equals-tot-num-lemma} 
As polynomials in $q$,
\begin{itemize}
\item[(i)] the three expressions 
\eqref{q-Jackson-sum-formula}, \eqref{q-Jackson-difference-formula}, \eqref{tot-num-formula}
for $t_q(n,\ell)$ asserted in Theorem~\ref{q-factorization-theorem}
all agree, and
\item[(ii)] the two expressions  \eqref{det-sequence-difference-formula}, \eqref{q-diff-nlm} 
 for $t_q(n,\ell,m)$ asserted in Theorem~\ref{fixed-det-sequence-theorem} 
agree if $m\le \ell-1.$
\end{itemize}
\end{proposition}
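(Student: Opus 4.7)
The plan is to treat the $q$-difference expressions \eqref{q-Jackson-difference-formula} and \eqref{q-diff-nlm} as pivots, and derive both of the companion formulas from them by expanding the function inside $\Delta_q^{n-1}$ as a polynomial in $x$ and applying $\Delta_q^{n-1}$ termwise. The central tool throughout is the identity \eqref{q-difference-on-powers}, which computes $\left[\Delta_q^{n-1}(x^A)\right]_{x=1} = (q^{A-n+2};q)_{n-1}/(1-q)^{n-1}$; this quantity vanishes whenever $A < n-1$ and otherwise equals $[n-1]!_q\,\qbin{A}{n-1}{q}$. The vanishing explains the summation upper limits $\ell-n$ and $\min(m,\ell-n)$ appearing in \eqref{tot-num-formula} and \eqref{det-sequence-difference-formula}.

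For part (ii), I would expand, by the ordinary binomial theorem,
\[
(x-1)^m\, x^{\ell-m-1} = \sum_{k=0}^m (-1)^k\binom{m}{k}\, x^{\ell-k-1}.
\]
Applying $\Delta_q^{n-1}$ monomial-by-monomial and invoking \eqref{q-difference-on-powers} truncates the sum to $k \le \min(m,\ell-n)$ and contributes a factor of $[n-1]!_q\,\qbin{\ell-k-1}{n-1}{q}$ to each surviving term. The leftover scalar $[n]_q^\ell\,[n-1]!_q / [n]!_q = [n]_q^{\ell-1}$ then reproduces \eqref{det-sequence-difference-formula} exactly. The hypothesis $m \le \ell-1$ is needed only to ensure that $(x-1)^m x^{\ell-m-1}$ is actually a polynomial in $x$.

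For part (i), the same expansion-and-termwise-$\Delta_q$ argument applied to $\bigl(1-(1+x(1-q))^\ell\bigr)/x = -\sum_{k=1}^\ell \binom{\ell}{k}(1-q)^k\, x^{k-1}$ will match \eqref{q-Jackson-difference-formula} with \eqref{tot-num-formula}: the $k < n$ terms die by \eqref{q-difference-on-powers}, and under the re-indexing $i = \ell - k$ the factor $(1-q)^{k-1}$ becomes $(-1)^{\ell-i-1}(q-1)^{\ell-i-1}$, which combines with the outer $(-1)^\ell$ from $(-[n]_q)^\ell$ to produce the alternating sign $(-1)^i$ of \eqref{tot-num-formula}; the remaining prefactors reconcile via $(q;q)_n = (1-q)^n[n]!_q$.

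To match \eqref{q-Jackson-difference-formula} with \eqref{q-Jackson-sum-formula}, I would instead use the closed-form iterated $q$-difference \eqref{q-diff-iterate} directly. Substituting $x \mapsto q^{n-1-k}$ into $f(x) = (1-(1+x(1-q))^\ell)/x$ produces the summand $q^{k-n+1}\bigl(1 - (1+q^{n-1-k}-q^{n-k})^\ell\bigr)$; the power contributions assemble into the main sum of \eqref{q-Jackson-sum-formula}, while the constant $1$ contributions collapse by the $q$-binomial theorem \eqref{q-binomial-theorem} specialized at $x = q$, giving $(-1)^{n-1}(q;q)_{n-1}$. The only obstacle is bookkeeping: one has to keep track of several cancelling sign factors, collect the powers of $q$ using $\binom{k}{2}+k = \binom{k+1}{2}$ and $\binom{n-1}{2}+(n-1) = \binom{n}{2}$, and repeatedly invoke the conversion $(q;q)_n = (1-q)^n[n]!_q$ to reconcile the overall prefactors.
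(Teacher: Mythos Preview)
Your proposal is correct and follows essentially the same approach as the paper: both use \eqref{q-Jackson-difference-formula} and \eqref{q-diff-nlm} as pivots, expand the argument of $\Delta_q^{n-1}$ and apply \eqref{q-difference-on-powers} termwise to reach \eqref{tot-num-formula} and \eqref{det-sequence-difference-formula}, and invoke \eqref{q-diff-iterate} to reach \eqref{q-Jackson-sum-formula}. The only cosmetic difference is that the paper isolates the $1/x$ term and evaluates $[\Delta_q^{n-1}(1/x)]_{x=1}$ directly via \eqref{q-difference-on-powers} with $A=-1$, whereas you keep the whole function together and collapse the resulting ``$1$'' contributions via the $q$-binomial theorem at $x=q$; these are the same computation organized differently.
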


\begin{proof}
\noindent
{\sf Assertion (i).}
Starting with \eqref{q-Jackson-difference-formula}
$$
t_q(n,\ell)
 = (1-q)^{-1}\frac{(-[n]_q)^\ell}{[n]!_q} \left[ \Delta_q^{n-1} 
        \biggl(\frac{1}{x}-\frac{(1+x(1-q))^\ell}{x}\biggr)\right]_{x=1},
$$
linearity of the operator 
$g(x) \longmapsto \left[\Delta_q^{n-1} g(x) \right]_{x=1}$
lets one expand in two different ways its subexpression
\begin{equation}
\label{q-Jackson-difference-formula-subexpression}
\left[ \Delta_q^{n-1} 
        \biggl(\frac{1}{x}-f(x) \biggr)\right]_{x=1}
\qquad \text{ where }f(x):=\frac{\left(1+x(1-q)\right)^\ell}{x}.
\end{equation}

The first way will yield \eqref{q-Jackson-sum-formula},
by expanding \eqref{q-Jackson-difference-formula-subexpression} as 
$\left[\Delta_q^{n-1} \left( \frac{1}{x} \right) \right]_{x=1}-
\left[\Delta_q^{n-1} f(x)  \right]_{x=1}.
$
Note that
$$
\left[\Delta_q^{n-1} \left( \frac{1}{x} \right) \right]_{x=1}
= \frac{(q^{1-n};q)_{n-1}}{(1-q)^{n-1}} 
= \frac{(-1)^{n-1}}{q^{\binom{n}{2}} (1-q)^{n-1}} (q;q)_{n-1}
$$
via \eqref{q-difference-on-powers}, which accounts for
the $(-1)^{n-1}(q;q)_{n-1}$ term inside the large parentheses of
\eqref{q-Jackson-sum-formula}.
Meanwhile, applying \eqref{q-diff-iterate}
to $\left[\Delta_q^{n-1} f(x) \right]_{x=1}$ and noting that
$
f(q^{n-1-k})=q^{1-n}q^{k}(1+q^{n-k-1}-q^{n-k})^\ell,
$
one obtains a summation that accounts for the remaining terms 
inside the large parentheses of \eqref{q-Jackson-sum-formula}.
This shows the equivalence of \eqref{q-Jackson-sum-formula},
\eqref{q-Jackson-difference-formula}.
The second way will yield  \eqref{tot-num-formula},
by expanding
$f(x)=\sum_{i=0}^{\ell} \binom{\ell}{i}(1-q)^{\ell-i} x^{\ell-i-1}$,
and noting that the $i=\ell$ term cancels with the $\frac{1}{x}$ appearing
inside \eqref{q-Jackson-difference-formula-subexpression}.
Therefore \eqref{q-Jackson-difference-formula} becomes
$$
\begin{aligned}
t_q(n,\ell)=& (-[n]_q)^\ell \frac{(1-q)^{n-1}}{(q;q)_n}
\sum_{i=0}^{\ell-1} -\binom{\ell}{i}(1-q)^{\ell-i} \left[ \Delta_q^{n-1}\left( x^{\ell-i-1} \right)
\right]_{x=1}\\
=& 
[n]_q^{\ell-1} \sum_{i=0}^{\ell-n} (-1)^i(q-1)^{\ell-i-1}\binom{\ell}{i}\qbin{\ell-i-1}{n-1}{q}
,
\end{aligned}
$$
using \eqref{q-difference-on-powers}.
The summands with $\ell-n+1\le i\le \ell-1$ vanish, 
showing the equivalence of \eqref{q-Jackson-difference-formula}, \eqref{tot-num-formula}. 

\vskip.1in
\noindent
{\sf Assertion (ii).}
Starting with \eqref{q-diff-nlm}, 
$$
t_q(n,\ell,m)
= \frac{[n]_q^\ell}{[n]!_q} 
\left[ \Delta_q^{n-1}\bigl( (x-1)^m x^{\ell-m-1}\bigr) \right]_{x=1},
$$
expand the $(x-1)^m$ factor via the binomial theorem.  
Using \eqref{q-difference-on-powers}, this expression for $t_q(n,\ell,m)$ becomes
\[
t_q(n,\ell,m)
= [n]_q^{\ell-1} \sum_{i = 0}^m    (-1)^i \,  \binom{m}{i}
               \,  \qbin{\ell-i - 1}{n - 1}{q}    .         
\]
As $i\le m\le \ell-1$, one has $\ell-i-1\ge 0$ and the sum is actually over
$0 \le i \le \ell-n$, agreeing with \eqref{det-sequence-difference-formula}.
\end{proof}

\subsection{The normalized characters on reflection conjugacy class sums}

\begin{definition}
\label{reflection-conjugacy-class-sum-definition}
For $\alpha$ in $\FF_q^\times$,
let $z_\alpha:=\sum_{t:\det(t)=\alpha} t$ in $\CC GL_n$ be the sum of 
reflections of determinant $\alpha$.  
\end{definition}

\begin{corollary}
\label{normalized-character-on-reflection-class} 
For $U$ in $\CCC_s$, and $k$ in the range $0 \leq k \leq \frac{n}{s}$, 
and any $\alpha$ in $\FF_q^\times \setminus \{ 1 \}$,
one has
\begin{eqnarray}
\label{semisimple-reflection-class-value}
\normchi^{U,\hook{k}{\frac{n}{s}}}(z_\alpha)
&=&
[n]_q  \left. \begin{cases}
q^{n-k-1} U(\alpha) &\text{ if }s=1 \\
0            &\text{ if }s \geq 2.
\end{cases} \right\},\\
\label{transvection-class-value}
\normchi^{U,\hook{k}{\frac{n}{s}}}(z_1) 
&=&
[n]_q \left. \begin{cases}
q^{n-k-1}-1 &\text{ if }s=1, \\
-1            &\text{ if }s \geq 2.
\end{cases} \right\}.
\end{eqnarray}

\end{corollary}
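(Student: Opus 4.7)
The plan is to reduce the computation to two ingredients: the sizes of the relevant conjugacy classes, and the normalized character values at a single representative, which have already been computed in Lemma~\ref{normalized-characters-on-semisimple-reflections}, Theorem~\ref{Hildebrand's-calculation}, and Corollary~\ref{normalized-characters-on-transvections}. Since $z_\alpha$ is the sum over a single conjugacy class (for $\alpha \in \FF_q^\times \setminus \{1\}$ all reflections of determinant $\alpha$ are semisimple with eigenvalue multiset $\{1^{n-1},\alpha\}$ and hence all conjugate, and for $\alpha=1$ all non-identity transvections are conjugate), one has $\normchi_V(z_\alpha) = N_\alpha \cdot \normchi_V(t_\alpha)$ where $N_\alpha$ is the class size and $t_\alpha$ a representative.

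First I would count the classes. A semisimple reflection with $\det=\alpha\ne 1$ is uniquely determined by its fixed hyperplane $H$ together with its $\alpha$-eigenline $L$, which must be complementary to $H$; there are $[n]_q$ hyperplanes, and for each one the lines not contained in $H$ number $[n]_q - [n-1]_q = q^{n-1}$, giving $N_\alpha = [n]_q q^{n-1}$. For $\alpha=1$, a non-identity transvection has a unique fixed hyperplane $H$, and the assignment $x\mapsto t(x)-x$ factors through a nonzero linear map $V/H\to H$; conversely every such map determines a transvection, producing $q^{n-1}-1$ transvections per hyperplane, so $N_1 = [n]_q(q^{n-1}-1)$.

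Next I would assemble the four cases by multiplying $N_\alpha$ by the appropriate normalized character value. For the semisimple case with $s\ge 2$, Lemma~\ref{normalized-characters-on-semisimple-reflections}(i) gives outright vanishing at $t_\alpha$, hence the zero in \eqref{semisimple-reflection-class-value}. For $s=1$, the identification $\hook{k}{n}=(n-k,1^k)$ allows one to apply Lemma~\ref{normalized-characters-on-semisimple-reflections}(iii) to obtain $\normchi^{U,\hook{k}{n}}(t_\alpha) = U(\alpha) q^{-k}$, and then $N_\alpha \cdot U(\alpha)q^{-k} = [n]_q q^{n-k-1} U(\alpha)$. For the transvection case with $s=1$, Corollary~\ref{normalized-characters-on-transvections} gives $\normchi^{U,\hook{k}{n}}(t_1)=(1-q^{n-k-1})/(1-q^{n-1})$, and writing $N_1 = -[n]_q(1-q^{n-1})$ produces $[n]_q(q^{n-k-1}-1)$. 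Finally, for the transvection case with $s\ge 2$, the second branch of Theorem~\ref{Hildebrand's-calculation} gives $\normchi^{U,\lambda}(t_1)=1/(1-q^{n-1})$, and multiplying by $N_1$ yields $-[n]_q$, as claimed in \eqref{transvection-class-value}.

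The argument is mostly bookkeeping rather than conceptual, so there is no significant obstacle. The only subtlety worth flagging is that for each divisor $s$ of $n$ the shape $\hook{k}{n/s}$ is a partition of $n/s$, not of $n$, so one must invoke the correct branch of each earlier statement; additionally one should track the sign cancellation between the factor $q^{n-1}-1$ in $N_1$ and the denominator $1-q^{n-1}$ in the transvection character values.
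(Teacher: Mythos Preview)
Your proposal is correct and follows essentially the same approach as the paper: count the reflections in each class (the paper uses an explicit coordinate description to get $[n]_q\cdot q^{n-1}$ semisimple reflections of each determinant and $[n]_q\cdot(q^{n-1}-1)$ transvections, matching your counts), then multiply by the normalized character values from Lemma~\ref{normalized-characters-on-semisimple-reflections}, Theorem~\ref{Hildebrand's-calculation}, and Corollary~\ref{normalized-characters-on-transvections}. Your geometric parametrizations (eigenline complementary to $H$; nonzero map $V/H\to H$) are a cosmetic variant of the paper's coordinate count.
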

\begin{proof}
First we count the reflections $t$ in $GL_n(\FF_q)$.
There are $[n]_q=1+q+q^2+\cdots+q^{n-1}$ choices for the hyperplane $H$ fixed
by $t$.  To count the reflections fixing $H$, without loss of generality one
can conjugate $t$ and assume that
$H$ is the hyperplane spanned by the first $n$ standard basis vectors
$e_1,\ldots,e_{n-1}$.

If $t$ is a semisimple reflection then its
conjugacy class is determined by its determinant, lying
in $\FF_q^\times \setminus \{1\}$.  Having fixed $\alpha:=\det(t)$,
there will be $q^{n-1}$ such reflections that fix $e_1,\ldots,e_{n-1}$:
each is determined by sending $e_n$ to $\alpha e_n + \sum_{i=1}^{n-1} c_i e_i$
for some  $(c_1,\ldots,c_{n - 1})$ in $\FF_q^{n-1}$.
Hence \eqref{semisimple-reflection-class-value} follows from
Lemma~\ref{normalized-characters-on-semisimple-reflections}.

The nonsemisimple reflections $t$ are the transvections,
forming a single conjugacy class, with $\det(t)=1$.
There will be $q^{n-1}-1$ transvections that fix $e_1,\ldots,e_{n-1}$:
each is determined by sending $e_n$ to $e_n + \sum_{i=1}^{n-1} c_i e_i$
for some $(c_1, \ldots, c_{n-1})$ in 
$\FF_q^{n-1} \setminus \{ \mathbf{0} \}$.
Hence \eqref{transvection-class-value} follows from
Theorem~\ref{Hildebrand's-calculation} and 
Corollary~\ref{normalized-characters-on-transvections}.
\end{proof}



\subsection{Proof of Theorem~\ref{fixed-det-sequence-theorem} for $q>2$.}

For a Singer cycle $c$ in $GL_n(\FF_q)$, and 
$\alpha=(\alpha_1,\ldots,\alpha_\ell)$ in $(\FF_q^\times)^{\ell}$ 
with $\prod_{i=1}^\ell\alpha_i=\det(c)$, Proposition~\ref{conj-count-prop} counts 
the reflection factorizations
$c=t_1 t_2 \cdots t_\ell$ with $\det(t_i)=\alpha_i$ as
\begin{equation}\label{factorizations with fixed alpha}
\frac{1}{|GL_n|} \sum_{\substack{(s,U):\\s | n\\U \in \CCC_s}}
      \sum_{k=0}^{\frac{n}{s}-1} 
      \deg(\chi^{U,\hook{k}{\frac{n}{s}}}) \cdot 
      \chi^{U,\hook{k}{\frac{n}{s}}} (c^{-1}) 
      \cdot \prod_{i=1}^\ell \normchi^{U,\hook{k}{\frac{n}{s}}}(z_{\alpha_i}).
\end{equation}
There are several simplifications in this formula.

Firstly, note that the outermost sum
over pairs $(s,U)$ reduces to the pairs with $s=1$:  since
$\det(c)$ is a primitive root in $\FF_q^\times$ by 
Proposition~\ref{norm-map-preserves-Singer} 
and $q>2$, one knows that $\det(c) \neq 1$, so
that at least one of the $\alpha_i$ is not $1$.  Thus its factor
$\normchi^{U,\hook{k}{\frac{n}{s}}}(z_{\alpha_i})$ in the last product
will vanish if $s \geq 2$ by \eqref{semisimple-reflection-class-value}.

Secondly, when $s=1$ then Corollary~\ref{normalized-character-on-reflection-class}
evaluates the product in \eqref{factorizations with fixed alpha} as
\begin{equation}
\label{varying-class-product-with-det}
\prod_{i=1}^\ell \normchi^{U,\hook{k}{\frac{n}{s}}}(z_{\alpha_i})
=
[n]_q^\ell \,\, (q^{n-k-1}-1)^m \,\, q^{(n-k-1)(\ell-m)} \,\, U(\det(c))
\end{equation}
if exactly $m$ of the $\alpha_i$ are equal to $1$, that is,
if the number of transvections in the factorization is $m$.
This justifies calling it $t_q(n,\ell,m)$ where $m\le \ell-1$.

Thirdly, for $s=1$ Proposition~\ref{Singer-cycle-character-values}(iii) 
shows\footnote{Here we use the fact that $c^{-1}$ is also a Singer cycle.}
that
$
\chi^{U,\hook{k}{n}} (c^{-1})  =  (-1)^k U(\det(c^{-1}))
$,
so there will be cancellation of the factor $U(\det(c))$
occurring in \eqref{varying-class-product-with-det}
within each summand of \eqref{factorizations with fixed alpha}.

Thus plugging in the degree formula
from the $s=1$ case of \eqref{hook-degree-formula},
one obtains the following formula for \eqref{factorizations with fixed alpha},
which we denote by $t_q(n,\ell,m)$, emphasizing its
dependence only on $\ell$ and $m$, not on the sequence $\alpha$:

\[
t_q(n,\ell,m)
= \frac{(q - 1)[n]_q^\ell}{|GL_n|} 
      \sum_{k=0}^{n-1} 
      q^{\binom{k + 1}{2}}\qbin{n-1}{k}{q} \,  
      (-1)^k 
      \,  (q^{n-k-1}-1)^m \,  q^{(n-k-1)(\ell-m)}.
\]
This expression may be rewritten using the $q$-difference operator $\Delta_q$ and 
\eqref{q-diff-iterate} as
\[
t_q(n,\ell,m)
= \frac{[n]_q^\ell}{|GL_n|} q^{\binom{n}{2}}(q-1)^n
\left[ \Delta_q^{n-1}\bigl( (x-1)^mx^{\ell-m-1}\bigr) \right]_{x=1}.
\]
Since $|GL_n|=q^{\binom{n}{2}} (-1)^n (q;q)_n$, this
last expression is the same as \eqref{q-diff-nlm}.  
Hence by Proposition~\ref{q-Jackson-sum-equals-tot-num-lemma},
this completes the proof of Theorem~\ref{fixed-det-sequence-theorem} 
for $q > 2$.

that the final sum has all summands $0$ except for the $i = 0$ summand, whence in this case we
have $[n]_q^{n - 1}$ reflections.  In particular, for $\ell = n$ the number of factorizations is
completely independent of the tuple $\alpha$ of determinants (provided the product of the entries
of $\alpha$ actually is $\det c$).

\subsection{Proof of Theorem~\ref{q-factorization-theorem} when $q>2$.}

We will use Theorem~\ref{fixed-det-sequence-theorem} for $q>2$ to derive
\eqref{q-Jackson-difference-formula} for $q>2$.
First note that one can choose a sequence of determinants
 $\alpha=(\alpha_1,\ldots,\alpha_\ell)$ in $\FF_q^\times$ that
has $\prod_{i=1}^\ell \alpha_i =\det(c)$ and has 
exactly $m$ of the $\alpha_i=1$ in a two-step process:
first choose $m$ positions out of $\ell$
to have $\alpha_i=1$, then choose the remaining sequence in 
$\left( \FF_q^\times\setminus\{1\} \right)^{\ell-m}$
with product equal to $\det(c)$. 
Simple counting shows that in a finite group $K$,
the number of sequences in $(K \setminus \{1\})^N$ whose
product is some fixed nonidentity element\footnote{In fact, 
Theorem~\ref{q-factorization-theorem} is stated for $n \geq 2$, but
remains valid for when $n=1$ and $q > 2$.  It is only in the trivial
case where $GL_1(\FF_2)=\{1\}$ that the ``Singer cycle'' $c$ is actually the
{\it identity element}, so that the count \eqref{factoring-non-identity-elements-count} fails.} of 
$K$ is
\begin{equation}
\label{factoring-non-identity-elements-count}
\frac{(|K|-1)^{N}-(-1)^{N}}{|K|}.
\end{equation}
Applying this to $K=\FF_q^\times$ with $N=\ell-m$ gives
$$
t_q(n,\ell)
=\sum_{m=0}^{\ell} t_q(n,\ell,m) \binom{\ell}{m} 
      \frac{(q-2)^{\ell-m}-(-1)^{\ell-m}}{q-1}.
$$
Thus from \eqref{q-diff-nlm} one has
\begin{align*}
t_q(n,\ell)
=& \frac{(q-1)[n]_q^\ell}{|GL_n|}q^{\binom{n}{2}}(q-1)^{n-1}
\left[ 
   \Delta_q^{n-1} \left(\sum_{m=0}^{\ell} \binom{\ell}{m} (x-1)^{m}x^{\ell-m-1}
      \frac{(q-2)^{\ell-m}-(-1)^{\ell-m}}{q-1}\right)
  \right]_{x=1}\\
=& \frac{(-[n]_q)^\ell}{|GL_n|}q^{\binom{n}{2}}(q-1)^{n-1}
    \left[ \Delta_q^{n-1} \biggl(\frac{(1+x(1-q))^\ell}{x}-\frac{1}{x}\biggr) \right]_{x=1}\\
=& (1-q)^{-1}\frac{(-[n]_q)^\ell}{[n]!_q} \left[ \Delta_q^{n-1} 
        \biggl(\frac{1}{x}-\frac{(1+x(1-q))^\ell}{x}\biggr)\right]_{x=1}
\end{align*}
which is \eqref{q-Jackson-difference-formula}.  Hence by 
Proposition~\ref{q-Jackson-sum-equals-tot-num-lemma},
this completes the proof of Theorem~\ref{q-factorization-theorem} when $q>2$.

\subsection{Proof of Theorem~\ref{q-factorization-theorem} when $q=2$.}

Here all reflections are transvections and \eqref{Chapuy-Stump-varying-class-answer}
gives us
$$
\begin{aligned}
t_q(n,\ell)
&=\frac{1}{|GL_n|} 
\sum_{\chi^\llambda \in \Irr(GL_n)} 
  \deg(\chi^{\llambda}) \cdot \chi^{\llambda}(c^{-1}) 
      \cdot \normchi^{\llambda}(z_1)^\ell \\
&=\frac{1}{|GL_n|} \sum_{\substack{(s,U):\\s | n\\U \in \CCC_s}}
     \underbrace{
      \sum_{k=0}^{\frac{n}{s}-1} 
      \deg(\chi^{U,\hook{k}{\frac{n}{s}}}) \cdot 
      \chi^{U,\hook{k}{\frac{n}{s}}} (c^{-1}) 
      \cdot \normchi^{U,\hook{k}{\frac{n}{s}}}(z_1)^\ell
     }_{\text{Call this } f(s,U)}
\end{aligned}
$$
using the vanishing of $\chi^{\llambda}(c^{-1})$
from Proposition~\ref{Singer-cycle-character-values}(i,ii).
We separate the computation into $s=1$ and $s \geq 2$,
and first compute $\sum_{U \in \CCC_1} f(s,U)$.  As $q=2$ there is only one
$U$ in $\CCC_1$, namely $U=\one$, and hence 
$$
\begin{aligned}
\sum_{U \in \CCC_1} f(s,U)=
f(1,\one) &=\sum_{k=0}^{n-1} 
       \deg(\chi^{\one,(n-k,1^k)})  \cdot
       \chi^{\one,(n-k,1^k)} (c^{-1}) \cdot
       \normchi^{\one,(n-k,1^k)}(z)^\ell \\
&=\sum_{k=0}^{n-1} 
     q^{\binom{k+1}{2}} \qbin{n-1}{k}{q} \cdot
     (-1)^k \cdot
     [n]^\ell_q(q^{n-k}-q^{n-k-1}-1)^\ell
\end{aligned}
$$
using the degree formula \eqref{hook-degree-formula} at $s=1$, the fact that
$
\chi^{(\one,n-k,1^k)} (c^{-1}) =
        (-1)^k \chi^{\one, (n)} (c^{-1}) = (-1)^k
$
from Proposition~\ref{Singer-cycle-character-values}(iii),
and the value $\normchi^{\one,(n-k,1^k)}(z_1) = [n]_q(q^{n-k}-q^{n-k-1}-1)$ from 
\eqref{transvection-class-value}.

For $s \geq 2$, we compute 
\begin{align*}
\sum_{\substack{(s,U):\\s |n,  s \geq 2 \\\ U \in \CCC_s}} f(s,U)
&=
\sum_{\substack{(s,U):\\s |n,  s \geq 2 \\\ U \in \CCC_s}}
   \sum_{k=0}^{ \frac{n}{s}-1 } 
     \deg(\chi^{U,\hook{k}{\frac{n}{s}}}) \cdot
      \chi^{U,\hook{k}{\frac{n}{s}}} (c^{-1}) \cdot
      \normchi^{U,\hook{k}{\frac{n}{s}}}(z_1)^\ell \\
&=\sum_{\substack{s |n\\ s \geq 2} }
   \sum_{k=0}^{ \frac{n}{s}-1 } 
          \frac{(-1)^{n-\frac{n}{s}} q^{s\binom{k+1}{2}} (q;q)_n}{(q^s;q^s)_{\frac{n}{s}}} 
            \qbin{\frac{n}{s}-1}{k}{q^s} \cdot
      \left( \sum_{U \in \CCC_s} \chi^{U,\hook{k}{\frac{n}{s}}}(c^{-1}) \right) \cdot
          (-[n]_q)^\ell 
\end{align*}
again via \eqref{hook-degree-formula},
Proposition~\ref{Singer-cycle-character-values}(iii),
and \eqref{transvection-class-value}. 
The parenthesized sum is $(-1)^{n-\frac{n}{s}-k}\mu(s)$ by 
Proposition~\ref{Singer-cycle-character-values}(iii, iv), so
\begin{align*}
\sum_{\substack{(s,U):\\s |n,  s \geq 2 \\\ U \in \CCC_s}} f(s,U)
&=(-[n]_q)^\ell (q;q)_n
 \sum_{\substack{s |n\\ s \geq 2} }
  \frac{1}{(q^s;q^s)_{\frac{n}{s}}}
   \left( \sum_{k=0}^{ \frac{n}{s}-1 } 
          (-1)^{k} q^{s\binom{k+1}{2}}
            \qbin{\frac{n}{s}-1}{k}{q^s} \right) \mu(s)\\
&= (-[n]_q)^\ell (q;q)_n
\sum_{\substack{s |n\\ s \geq 2} } 
     \frac{\mu(s)}{(q^s;q^s)_{\frac{n}{s}}}
              (q^s;q^s)_{\frac{n}{s}-1} \\
&= (-[n]_q)^\ell (q;q)_{n-1}
\sum_{\substack{s |n\\ s \geq 2} } \mu(s) \\ 
&= - (-[n]_q)^\ell (q;q)_{n-1},
\end{align*}
where the second equality used the $q$-binomial theorem \eqref{q-binomial-theorem}.  
Thus one has for $q=2$ that
\begin{equation}
\label{final-answer-for-q=2}
t_q(n,\ell)
=\frac{1}{|GL_n|} 
\left(
- (-[n]_q)^\ell (q;q)_{n-1}
+\sum_{k=0}^{n-1} 
     q^{\binom{k+1}{2}} \qbin{n-1}{k}{q} \cdot
     (-1)^k \cdot
     [n]^\ell_q(q^{n-k}-q^{n-k-1}-1)^\ell
\right).
\end{equation}
Since $|GL_n|=(-1)^n q^{\binom{n}{2}} (q;q)_n$, one finds
that \eqref{final-answer-for-q=2} agrees
with the expression \eqref{q-Jackson-sum-formula}  
$$
t_q(n,\ell)=\frac{(-[n]_q)^\ell}{q^{\binom{n}{2}}(q;q)_n} 
  \left(
    (-1)^{n-1} (q;q)_{n-1}+
    \sum_{k=0}^{n-1}(-1)^{k+n} q^{\binom{k+1}{2}} \qbin{n-1}{k}{q} 
                           (1+q^{n-k-1}-q^{n-k})^\ell
  \right)
$$
after redistributing the $[n]_q^\ell$ and powers of $-1$.  This completes the proof of
Theorem~\ref{q-factorization-theorem} for $q=2$.

\section{Further remarks and questions}
\label{questions-remarks} 

\subsection{Product formula versus partial fraction expansions}
The equivalence between \eqref{Jackson-ordinary-gf-product}, 
\eqref{Jackson-difference-formula}, and between
 \eqref{q-Jackson-ordinary-gf-product},
\eqref{q-Jackson-sum-formula} are explained as follows.
One checks the partial fraction expansion of
\eqref{Jackson-ordinary-gf-product} is
$$
\sum_{\ell \geq 0} t(n,\ell) x^\ell
\,\, = \,\,  
\dfrac{n^{n-2} x^{n-1}}
     {\prod_{k=0}^{n-1}  \left( 1 - x n\left(\frac{n-1}{2}-k\right) \right) }
=
        \frac{1}{n!} \sum_{k=0}^{n-1}
       \frac{(-1)^k \binom{n-1}{k}}
             {1-xn\left(\frac{n-1}{2}-k\right)} 
$$
and comparing coefficients of $x^\ell$ gives the first equality in
\eqref{Jackson-difference-formula}.

Similarly, one checks that the partial fraction expansion of
the right side of \eqref{q-Jackson-ordinary-gf-product} is
\begin{equation}
\label{q-partial-fraction-expansion}
\begin{aligned}
& (q^n-1)^{n-1} \cdot \dfrac{ x^n }
    {\left(1+x[n]_q \right) \prod_{k=0}^{n-1} \left(1+x[n]_q(1+q^k-q^{k+1})\right)} \\
&=\dfrac{(-1)^n}{q^{\binom{n}{2}} (q^n-1)}
   \left(
     \frac{1}{1+x[n]_q}
      + \sum_{k=0}^{n-1} \dfrac{(-1)^{k+1} 
                          q^{\binom{k+1}{2}}}{(q;q)_k (q;q)_{n-1-k}} \cdot
                              \dfrac{1}{1+x[n]_q(1+q^{n-k-1}-q^{n-k})}
   \right).
\end{aligned}
\end{equation}
Comparing coefficients of $x^\ell$ in \eqref{q-partial-fraction-expansion} 
gives \eqref{q-Jackson-sum-formula}. 
This proves \eqref{q-Jackson-ordinary-gf-product}.

\subsection{More observations about $t_q(n,\ell,m)$}

From \eqref{tot-num-formula} and \eqref{det-sequence-difference-formula}
one can derive $q=1$ limits
\[
\begin{array}{rll}
t_1(n,\ell)&:=
\lim_{ q\to 1} \frac{t_q(n,\ell)}{(1-q)^{n-1}}
&= (-n)^{\ell-1}\binom{\ell}{n}\\
t_1(n,\ell,m)&:=\lim_{q\to 1} t_q(n,\ell,m)&=n^{\ell-1}\binom{\ell-m-1}{\ell-n}.
\end{array}
\]
We do not know an interpretation for these limits.

\subsection{Exponential generating function}
The classical count $t(n,\ell)$ of
factorizations of an $n$-cycle into $\ell$ transpositions
has both an elegant ordinary generating 
function \eqref{Jackson-ordinary-gf-product} and 
{\it exponential} generating function
\begin{equation}
\label{Jackson-exponential-gf}
\sum_{\ell \geq 0} t(n,\ell) \frac{u^\ell}{\ell!}
=\frac{1}{n!} \left( e^{u\frac{n}{2}} - e^{-u\frac{n}{2}} \right)^{n-1}.
\end{equation}
This was generalized by Chapuy and Stump \cite{ChapuyStump}
to {\it well-generated} finite complex reflection group $W$ as
follows;  we refer to their paper for the background on such
groups.  If $W$ acts irreducibly on $\CC^n$, with a total
of $\Nref$ reflections and $\Nhyp$ reflecting hyperplanes, then
for any Coxeter element $c$, the number $a_\ell$ of ordered factorizations
$c=t_1 \cdots t_\ell$ into reflections satisfies
\begin{equation}
\label{well-generated-Coxeter-element-exp-gf}
\begin{aligned}
\sum_{\ell \geq 0} a_{\ell} \frac{u^\ell}{\ell!}
&=\frac{1}{|W|} \left( e^{u\frac{\Nref}{n}} - e^{-u\frac{\Nhyp}{n}} \right)^{n} \\
&=\frac{1}{|W|} e^{-u \Nhyp} \left( e^{u\frac{\Nref+\Nhyp}{n}} - 1 \right)^{n} \\
&=\frac{1}{|W|} e^{-u \Nhyp} \left[ \Delta^n \left( e^{ux\frac{\Nref+\Nhyp}{n}} \right)
\right]_{x=0}
\end{aligned}
\end{equation}
where the last equality uses the fact that the
difference operator $\Delta$
satisfies $\left[ \Delta^n(e^{ax})\right]_{x=0}=(e^{a}-1)^n$.  

One can derive an exponential generating function 
analogous to \eqref{well-generated-Coxeter-element-exp-gf} 
for the number $t_q(n,\ell)$ of Singer cycle factorizations in $W=GL_n(\FF_q),$
\begin{equation}
\label{tq-expgf}
\sum_{\ell \geq 0} t_q(n,\ell) 
 \frac{u^\ell}{\ell!}
=\frac{(q-1)^{n-1} q^{\binom{n}{2}}}{|W|} 
\,\, e^{-u \Nhyp} \left[ \Delta_q^{n-1} \left( 
                    \frac{1}{x} \left( e^{ux\frac{\Nref+\Nhyp}{q^{n-1}}} - 1
                \right) \right) \right]_{x=1},
\end{equation}
where $\Nhyp, \Nref$ denote the number of reflecting hyperplanes and
reflections in $W=GL_n(\FF_q)$, that is, 
$$
\begin{aligned}
\Nhyp&=[n]_q,\\
\Nref&=[n]_q(q^n-q^{n-1}-1).
\end{aligned}
$$

To prove \eqref{tq-expgf}, use \eqref{q-Jackson-difference-formula} to find
$$
\begin{aligned}
\sum_{\ell \geq 0} t_q(n,\ell) 
 \frac{u^\ell}{\ell!} =&\frac{(1-q)^{n-1}}{(q;q)_n}
     \left[ \Delta_q^{n-1}\biggl( 
           \frac{1}{x}\bigl( e^{-u[n]_q}-e^{-u[n]_q(1+x(1-q))}\bigr)\biggr) 
     \right]_{x=1}
 \\
=& \frac{(-1)^n (1-q)^{n-1} q^{\binom{n}{2}}}{|W|} e^{-u[n]_q}
  \left[ \Delta_q^{n-1}\biggl( 
    \frac{1}{x}\bigl( 1-e^{u x [n]_q (q-1))}\bigr)\biggr) 
  \right]_{x=1}.
\end{aligned}
$$
Noting that $[n]_q=\Nhyp$, and $[n]_q(q-1)=q^n-1=(\Nhyp+\Nref)/q^{n-1}$,
and distributing some negative signs, gives  \eqref{tq-expgf}.

\subsection{Hurwitz orbits}
\label{Hurwitz-orbit-conjectures-section}

In a different direction, one can consider
the {\it Hurwitz action} of the {\it braid group on $\ell$ strands}
acting on length $\ell$ ordered factorizations $c=t_1 t_2 \cdots t_\ell$.
Here the braid group generator $\sigma_i$ acts on ordered factorizations as follows:
$$
\begin{array}{rccll}
(t_1,\ldots,t_{i-1},& t_i,&t_{i+1},& t_{i+2},\ldots,t_\ell) &\overset{\sigma_i}{\longmapsto} \\
(t_1,\ldots,t_{i-1},& t_{i+1},& t_{i+1}^{-1} t_i t_{i+1},& t_{i+2},\ldots,t_\ell).
\end{array}
$$
For well-generated complex reflection groups $W$ of rank $n$ and taking $\ell=n$, 
Bessis showed \cite[Prop. 7.5]{Bessis-Kpi1} 
that the set of all shortest ordered factorizations $(t_1,\ldots,t_n)$ of
a Coxeter element $c=t_1 t_2 \cdots t_n$ forms
a single transitive orbit for this Hurwitz action.

One obvious obstruction to an analogous transitivity assertion for $c$ a Singer cycle in 
$GL_n(\FF_q)$ and factorizations $c=t_1 t_2 \cdots t_\ell$ is that 
the unordered $\ell$-element multiset $\{\det(t_i)\}_{i=1}^\ell$ of $\FF_q^\times$
is constant on a Hurwitz orbit, but (when $q \neq 2$) can vary between different factorizations,
even when $\ell=n$.  Nevertheless, we make the following 
conjecture.

\begin{conjecture}
\label{Hurwitz-orbit-conjecture}
Any two factorizations $c=t_1 t_2 \cdots t_\ell$ with
the same multiset $\{\det(t_i)\}_{i=1}^\ell$ lie in the same Hurwitz orbit.
In particular, there is only one Hurwitz orbit of factorizations when $q=2$ for 
any $\ell$.
\end{conjecture}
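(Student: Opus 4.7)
The plan is to reduce the conjecture to the shortest-factorization case $\ell = n$, and to treat the latter by induction on $n$.  First I would record that the Hurwitz action of the braid group $B_\ell$ on the sequence of conjugacy classes of the factors descends through the standard quotient $B_\ell \twoheadrightarrow \Symm_\ell$ to the action of $\Symm_\ell$ by permutation of positions, since each $\sigma_i$ transposes the classes in positions $i$ and $i+1$.  Hence every permutation of the determinant sequence is realized within a single Hurwitz orbit, and the conjecture is equivalent to showing that for each \emph{ordered} sequence $\alpha = (\alpha_1, \ldots, \alpha_\ell)$ with $\prod_i \alpha_i = \det(c)$, all factorizations with $\det(t_i) = \alpha_i$ form one Hurwitz orbit.

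For the reduction from length $\ell > n$ down to length $n$, the idea is a local reduction lemma: since the running partial product $t_1 \cdots t_k$ moves through elements of $GL_n(\FF_q)$ whose absolute reflection length starts at $0$, ends at $n$, and changes by at most $1$ at each step, the hypothesis $\ell > n$ forces some step to decrease the absolute length, and by standard Hurwitz manipulations one should be able to slide a canceling pair $(r, r^{-1})$ of reflections to the end of the factorization.  The initial segment is then a length-$(\ell-2)$ factorization of $c$ whose determinant sequence has lost one pair $(\alpha, \alpha^{-1})$, and one iterates.  The combinatorial task is to verify that one can always arrange to cancel a pair whose determinants still appear in the target multiset, which should follow from a straightforward counting argument.

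The real difficulty lies in the base case $\ell = n$.  The count $t_q(n,n,m) = [n]_q^{n-1}$ from Theorem~\ref{fixed-det-sequence-theorem} gives the target orbit size, and a natural induction on $n$ would use Hurwitz moves to normalize one factor $t_i$ of a shortest factorization (say, with prescribed fixed hyperplane and prescribed determinant), then treat $c \cdot t_i^{-1}$ as a regular elliptic element of a smaller $GL_{n-1}(\FF_q)$ acting on a complementary subspace, so that the inductive hypothesis applies to the remaining $n-1$ factors.  The principal obstacle is to establish a workable analogue, for the absolute-order interval $[1,c]$ in $GL_n(\FF_q)$ below a Singer cycle, of the noncrossing-partition lattice that is the structural backbone of Bessis's transitivity proof \cite[Prop.~7.5]{Bessis-Kpi1} for well-generated complex reflection groups; in particular, one needs to verify that residual elements $c \cdot t_i^{-1}$ are themselves regular elliptic in an appropriate sub--general linear group, so that the induction closes.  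Developing this ``sub-Singer-cycle'' structure — perhaps indexed by $\FF_q$-subspaces carrying a compatible primitive-field-element datum — is, I expect, the heart of the conjecture.
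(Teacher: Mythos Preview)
This statement is a \emph{conjecture} in the paper, not a theorem: the authors do not prove it, and present it as open.  The paper's only ``proof'' is partial evidence --- a short direct argument for the case $n=\ell=2$, together with computer verification for a handful of small $(q,n,\ell)$.  So there is nothing to compare your proposal against except the authors' discussion of why the obvious approach breaks down.

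More importantly, the paper explicitly identifies a concrete obstruction to exactly the Bessis-style induction you outline.  You propose to normalize one factor $t_i$ and then treat $c\, t_i^{-1}$ as a regular-elliptic element in a smaller $GL_{n-1}(\FF_q)$, applying the inductive hypothesis to its shortest factorizations.  But the paper observes (Section~\ref{Hurwitz-orbit-conjectures-section}) that elements in the interval $[e,c]$ below a Singer cycle are \emph{not} in general regular elliptic in any smaller general linear group, and do \emph{not} in general enjoy Hurwitz-transitivity on their own shortest factorizations.  The authors give a specific counterexample: in $GL_4(\FF_2)$ the unipotent element $u$ that is a single Jordan block of size $4$ lies in $[e,c]$, yet its $64$ shortest reflection factorizations split into two Hurwitz orbits of sizes $16$ and $48$.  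Thus the induction you describe cannot close: the residual element $c\, t_i^{-1}$ may be a $u$ of this kind, and the inductive hypothesis simply fails for it.  Your acknowledgment that ``developing this sub-Singer-cycle structure \ldots\ is the heart of the conjecture'' is accurate, but the paper's example shows that no such clean structure exists at the level of individual elements of $[e,c]$.

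Your length-reduction step from $\ell>n$ to $\ell=n$ is also not justified as written.  A drop in absolute length at step $i$ does not produce a factor pair of the form $(r,r^{-1})$, nor is it clear that Hurwitz moves let you manufacture one while controlling the multiset of determinants; the claim that this ``should follow from a straightforward counting argument'' would need substantial work, since the Hurwitz action preserves the multiset of conjugacy classes of the $t_i$, and a pair $(r,r^{-1})$ with $\det r\neq 1$ contributes two specific classes that may not be present in the original multiset at all.
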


We report here some partial evidence for Conjecture~\ref{Hurwitz-orbit-conjecture}.

\begin{itemize}
\item  It is true when $n=\ell=2$;  here is a proof.  
Fix a Singer cycle $c$ in $GL_2(\FF_q)$ and $\alpha_1, \alpha_2$ in $\FF_q^\times$
having $\det(c)= \alpha_1 \alpha_2$. Theorem~\ref{fixed-det-sequence-theorem} 
in the case $\ell = n = 2$ tells us that there will be exactly $[2]_q=q + 1$
factorizations $c = t_1 \cdot t_2$ of $c$ as a product of two reflections
with $(\det(t_1),\det(t_2))=(\alpha_1,\alpha_2)$, and similarly 
$q + 1$ for which $(\det(t_1),\det(t_2))=(\alpha_2,\alpha_1)$.  This gives
a total of  either $q+1$ or $2(q+1)$ factorizations with this multiset of determinants,
depending upon whether or not $\alpha_1 = \alpha_2$.
Now note that applying the Hurwitz action $\sigma_1$ twice sends
\[
(t_1, t_2) \overset{\sigma_1}{\longmapsto} 
(t_2 \,\, , \,\, t_2^{-1} t_1 t_2) \overset{\sigma_1}{\longmapsto} 
(t_2^{-1} t_1 t_2, \,\, \underbrace{t_2^{-1} t_1^{-1} t_2 t_1 t_2}_{=c^{-1} t_2 c}),
\]
yielding a factorization with the same determinant sequence,
but whose second factor changes from $t_2$ to $c^{-1} t_2 c$.
This moves the reflecting hyperplane (line) $H$ for $t_2$ to the line
$c^{-1}H$ for $c^{-1} t_2 c$.  Since 
$\FF_{q^2}^\times=\langle c \rangle$, one knows that the powers of 
$c$ act transitively on the lines in $\FF_{q^2} \cong \FF_q^2$, and hence
there will be at least $q+1$ different second factors 
$\{ c^{-i} t_2 c^i \}$ achieved in the Hurwitz orbit.  This shows that
the Hurwitz orbit contains {\it at least} $q+1$ or $2(q+1)$ different factorizations,
depending upon whether or not $\alpha_1 = \alpha_2$, so it
exhausts the factorizations that achieve this multiset of determinants.
This completes the proof.

\item Conjecture~\ref{Hurwitz-orbit-conjecture}
has also been checked 
\begin{itemize} 
\item for $q=2$ when $n=\ell \leq 5$
and $n=3,\ell=4$, 
\item for $q=3$ when $n=2$ and $\ell \leq 4$,
and also when $n=\ell=3$,
\item 
for $q=5$ when $n=2$ and $\ell \leq 3$. 
\end{itemize}
\end{itemize}

One might hope to prove Conjecture~\ref{Hurwitz-orbit-conjecture}
similarly to the uniform proof for transitivity of the Hurwitz action on
short reflection factorizations of Coxeter elements in real reflection groups,
given in earlier work of Bessis \cite[Prop. 1.6.1]{Bessis}.  
His proof is via induction on the rank, and 
relies crucially on proving these facts:
\begin{itemize}
\item
The elements $w \leq c$ in the {\it absolute order}, that is, the elements which appear as 
partial products $w=t_1 t_2 \cdots t_i$ in shortest factorizations $c=t_1 t_2 \cdots t_n$, 
are all themselves {\it parabolic Coxeter elements}, that is, Coxeter elements
for conjugates of standard parabolic subgroups of $W$.
\item All such parabolic Coxeter elements share the property that the Hurwitz action is transitive
on their shortest factorizations into reflections.
\end{itemize}

One encounters difficulties in trying to prove this analogously,
when one examines the interval $[e,c]$ of elements lying below a Singer
cycle $c$ in $GL_n(\FF_q)$:
\begin{itemize}
\item
It is no longer true that the elements $g$ in $[e,c]$
all  have a transitive Hurwitz action on their own short factorizations.  
For example in $GL_4(\FF_2)$, the unipotent element $u$ equal to a single Jordan block of size
$4$
appears as a partial product on the way to factoring a Singer cycle, but its $64$ short
factorizations $u=t_1 t_2 t_3$ into reflections break up into two Hurwitz orbits, 
of sizes $16$ and $48$.
\item
It also seems nontrivial to characterize intrinsically the elements in $[e,c]$
for a fixed Singer cycle $c$.  For example, the elements $g$ which 
are {\it $c$-noncrossing} in the following sense appear\footnote{That is, it is true for
$GL_n(\FF_2)$ with $n=2,3,4$ and
also for $GL_n(\FF_3)$ with $n=2,3$.} to be always among them:  
arranging the elements $\FF_{q^n}^\times=\{1,c,c^2,\ldots,c^{q^n-2}\}$ clockwise circularly, 
$g$ permutes them (after embedding them via $\FF_{q^n} \cong \FF_q^n$) in cycles that are
each
oriented clockwise, and these oriented arcs do not cross each other.  However, starting already
with 
$GL_4(\FF_2)$ and $GL_3(\FF_3)$, there are {\it other} element 
below the Singer cycle besides these $c$-noncrossings.  
\end{itemize}

%

%

\subsection{$q$-Noncrossings?}
\label{q-noncrossings-section}

The poset of elements $[e,c]$ lying below a Singer cycle $c$ in the absolute order
on $GL_n(\FF_q)$ would seem like a reasonable candidate for a $q$-analogue of the
usual poset of {\it noncrossing partitions} of $\{1,2,\ldots,n\}$; see \cite{Armstrong}.
However, $[e,c]$ does not seem to be so well-behaved in $GL_n(\FF_q)$, 
although a few things were proven about it by Jia Huang in \cite{Huang}.  

For instance, he showed that the absolute length
of an element $g$ in $GL_n(\FF_q)$, that is, the minimum length of a factorization into
reflections, coincides with the codimension of the fixed
space $(\FF_q^n)^g$.  Hence the poset $[e,c]$ is ranked in a similar
fashion to the noncrossing partitions of real reflection groups,
and has an order- and rank-preserving map 
\[
\begin{array}{rcl}
[e,c] &\overset{\pi}{\longrightarrow}&L(\FF_q^n) \\
g &\longmapsto & (\FF_q^n)^g
\end{array}
\]
to the lattice $L(\FF_q^n)$ of subspaces of $\FF_q^n$.  Because conjugation by $c$ acts
transitively on lines and hyperplanes, this map is surjective for $n \leq 3$; empirically, it seems
to be surjective in general.  The poset $[e,c]$ also has a {\it Kreweras complementation}
anti-automorphism $w \mapsto w^{-1}c$.

However, Huang noted that the rank sizes of $[e,c]$
do not seem so suggestive. E.g., for $[e,c]$ in $GL_4(\FF_2)$ they
are $(1,60,240,60,1)$, and preclude $\pi$ being an $N$-to-one map for some integer $N$,
since $L(\FF_2^4)$ has rank sizes $(1,15,35,15,1)$ and $35$ does not divide $240$.

\begin{question}
Are the {\it $c$-noncrossing elements} mentioned in
Section~\ref{Hurwitz-orbit-conjectures-section}
a better-behaved subposet of $[e,c]$?  
\end{question}

\subsection{Regular elliptic elements versus Singer cycles}
\label{regular-elliptic-elements-remark}
Empirical evidence supports the following
hypothesis regarding the {\it regular elliptic elements} of $GL_n(\FF_q)$
that appeared in Proposition~\ref{regular-elliptic-definition-proposition}.

\begin{conjecture}
\label{reg-ell-conj}
The number of ordered reflection factorizations $g=t_1 t_2 \cdots t_\ell$ 
is the same for all regular elliptic elements $g$ in $GL_n(\FF_q)$,
namely the quantity $t_q(n,\ell)$ that appears 
in Theorem~\ref{q-factorization-theorem}.
\end{conjecture}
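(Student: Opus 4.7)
The plan is to apply Frobenius's formula (Proposition~\ref{conj-count-prop}) with every $A_i$ equal to the full set of reflections in $GL_n(\FF_q)$, and to show that the resulting character sum is constant on the set of regular elliptic elements. Since Singer cycles are regular elliptic, the common value is then forced to equal $t_q(n,\ell)$ by Theorem~\ref{q-factorization-theorem}.

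Let $z := \sum_{\alpha \in \FF_q^\times} z_\alpha \in \CC GL_n$ denote the sum of all reflections. For $n \geq 2$ and $g$ regular elliptic, Proposition~\ref{conj-count-prop} combined with the vanishing in Proposition~\ref{Singer-cycle-character-values}(i,ii)---which applies to \emph{all} regular elliptic $g$, not just Singer cycles---expresses the count as
\[
\frac{1}{|GL_n|}\sum_{(s,U)} \sum_{k=0}^{n/s-1} \deg(\chi^{U,\hook{k}{n/s}})\, \chi^{U,\hook{k}{n/s}}(g^{-1})\, \normchi^{U,\hook{k}{n/s}}(z)^\ell.
\]
My first step is to aggregate Corollary~\ref{normalized-character-on-reflection-class} over $\alpha \in \FF_q^\times$, using the orthogonality $\sum_{\alpha \in \FF_q^\times} U(\alpha) = (q-1)\delta_{U,\one}$ for $U \in \CCC_1$, to obtain the clean dichotomy
\[
\normchi^{U,\hook{k}{n/s}}(z) =
\begin{cases}
[n]_q (q^{n-k} - q^{n-k-1} - 1) & \text{if } (s,U) = (1, \one), \\
-[n]_q & \text{otherwise.}
\end{cases}
\]

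I would then split the sum into its $(s,U) = (1,\one)$ contribution and the remainder. For the $(1,\one)$ part, Proposition~\ref{Singer-cycle-character-values}(iii) applied with $U = \one$ forces $\chi^{\one,\hook{k}{n}}(g^{-1}) = (-1)^k$ for every regular elliptic $g$: the single-term Frobenius-orbit sum collapses to $\one(N_{\FF_{q^n}/\FF_q}(\beta)) = 1$. Hence this portion depends only on $n$, $q$, $\ell$. For the remainder, invoke the regular representation identity $\sum_\chi \deg(\chi)\chi(g^{-1}) = 0$ (valid since $g \neq 1$ for $n \geq 2$), combined once more with Proposition~\ref{Singer-cycle-character-values}(i,ii), to reduce
\[
\sum_{(s,U) \neq (1,\one)} \sum_k \deg(\chi^{U,\hook{k}{n/s}})\, \chi^{U,\hook{k}{n/s}}(g^{-1})
= -\sum_k (-1)^k \deg(\chi^{\one,\hook{k}{n}}),
\]
which evaluates to $-(q;q)_{n-1}$ by the $q$-binomial theorem \eqref{q-binomial-theorem} and is independent of $g$. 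Multiplied by the common factor $(-[n]_q)^\ell$, the remainder becomes independent of $g$ as well.

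Consequently the total count is a function of $n$, $q$, $\ell$ alone, so it equals its value on a Singer cycle, namely $t_q(n,\ell)$. The delicate step to verify carefully is the aggregation yielding the dichotomy for $\normchi^{U,\hook{k}{n/s}}(z)$: the absence of Proposition~\ref{norm-map-preserves-Singer}'s norm-surjectivity for general regular elliptic $g$ is precisely what initially appears to obstruct the argument, and it is the regular representation identity that lets one evaluate the ``rest'' sum without needing to track $\chi^{U,\hook{k}{n/s}}(g^{-1})$ on individual pairs $(s,U) \neq (1,\one)$. Beyond this observation, the ingredients are essentially those already assembled in Sections~\ref{character-values-section}--\ref{main-result-proof-section}.
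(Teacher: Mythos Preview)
Your argument is correct, and in fact it \emph{proves} what the paper leaves as an open conjecture. The paper gives no proof of Conjecture~\ref{reg-ell-conj}; it only records verification for $n=2,3$ and explains that its own method---which would evaluate $\sum_{U \in \CCC_s} \chi^{U,\hook{k}{n/s}}(g^{-1})$ directly via the M\"obius identity \eqref{moebius-function-equality} in Proposition~\ref{Singer-cycle-character-values}(iv)---breaks down when $g$ is regular elliptic but not a Singer cycle, because the norm $N_{\FF_{q^n}/\FF_{q^s}}(\beta)$ need not generate $\FF_{q^s}^\times$.

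Your route is genuinely different and sidesteps this obstruction. The key observation is that once the reflection classes are aggregated into $z$, the normalized character $\normchi^{U,\hook{k}{n/s}}(z)$ depends on $(s,U)$ only through whether $(s,U)=(1,\one)$, taking the constant value $-[n]_q$ otherwise. This lets you factor $(-[n]_q)^\ell$ out of the entire ``remainder'' sum and replace the need to evaluate the individual terms $\chi^{U,\hook{k}{n/s}}(g^{-1})$ by a single appeal to column orthogonality $\sum_{\chi \in \Irr(GL_n)} \deg(\chi)\,\chi(g^{-1})=0$ for $g \neq e$, combined with the vanishing from Proposition~\ref{Singer-cycle-character-values}(i,ii). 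The paper's approach, by contrast, tries to compute the remainder pair-by-pair via \eqref{moebius-function-equality}, which forces the Singer-cycle hypothesis. What your approach buys is exactly the uniformity over all regular elliptic $g$; what it loses is nothing, since the two methods yield identical expressions (compare your $-(q;q)_{n-1}$ with the paper's computation in Section~\ref{main-result-proof-section} for $q=2$). The argument is complete as written; you may wish to note explicitly that $g^{-1}$ is regular elliptic whenever $g$ is, and that $n \geq 2$ guarantees $g \neq e$.
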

\noindent
Conjecture~\ref{reg-ell-conj} has been verified for $n=2$ and $n=3$ 
using explicit character values \cite{Steinberg}. In the case $\det g\neq 1$, 
only minor modifications are required in our arguments to prove
Conjecture~\ref{reg-ell-conj}.  
The spot in our proof  that breaks down for regular elliptic elements 
with $\det g= 1$
is the identity \eqref{moebius-function-equality}.  For example,   
when $s=n=4$ and $q=2$, if one
chooses $\beta$ in $\FF_{2^4}^\times$ with $\beta^5=1$ (so still one
has $\FF_{2^4}=\FF_2(\beta)$, but $\FF_{2^4}^\times \neq \langle \beta \rangle$),
then there are three homomorphisms $\varphi$ with
free Frobenius orbits and 
$
\sum_{\phi} \left( 
        \varphi(\beta) + \varphi(\beta^2) + \varphi(\beta^4) + \varphi(\beta^8) 
    \right)=-3 \quad (\neq 0 = \mu(4)).
$
Nevertheless, in this $GL_4(\FF_2)$ example it appeared from {\tt GAP} \cite{GAP} 
computations that such regular elliptic $g$ with $g^5=1$ had the same number of factorizations 
into $\ell$ reflections for all $\ell$ as did a Singer cycle in $GL_4(\FF_2)$.

\begin{remark}
On the other hand, in considering transitivity of Hurwitz actions, we {\it did}
see a difference in behavior for regular elliptic elements versus Singer cycles:  
in $GL_4(\FF_2)$, there are $3375=(2^4-1)^{4-1}$ short reflection
factorizations $t_1 t_2 t_3 t_4$ both for the
the Singer cycles (the elements whose characteristic polynomials are
$x^4+x^3+1$ or $x^4+x+1$) and for the non-Singer cycle regular elliptic elements
(the elements whose characteristic polynomials are $x^4+x^3+x^2+x+1$).
However, for the Singer cycles, these factorizations form one Hurwitz orbit,
while for the non-Singer cycle regular elliptic elements
they form four Hurwitz orbits.
\end{remark}

\subsection{The approach of Hausel, Letellier, and Rodriguez-Villegas}
The number of factorizations  $g=t_1 t_2 \cdots t_\ell$ where  $t_1,\ldots,t_\ell,g$ come from specified $GL_n(\FF_q)$ conjugacy
classes $C_1,\ldots,C_\ell,C_{\ell+1}$
appears in work of Hausel, Letellier, and Rodriguez-Villegas \cite{HauselLetellierRodriguez} and more
recently Letellier \cite{Letellier}.  They interpret it in terms of the topology of objects called
{\it character varieties}  under certain {\it genericity conditions} \cite[Definition 3.1]{Letellier} on the conjugacy classes. 
One can check that these conditions are satisfied in the case of interest to us, that is,  when $C_{\ell+1}$ is a conjugacy class of Singer cycles and the $C_1,\ldots,C_\ell$ are all conjugacy classes of reflections.   Assuming these genericity conditions, \cite[Theorem 4.14]{Letellier} gives an expression for the number of such factorizations in terms of a specialization 
$\HH_\omega(q^{-\frac{1}{2}},q^{\frac{1}{2}})$ of a rational function $\HH_\omega(z,w)$ defined 
in \cite[\S 1.1]{HauselLetellierRodriguez} via {\it Macdonald symmetric functions}.
In principle, this expression should recover Theorem~\ref{fixed-det-sequence-theorem} as a very special case.  However, in practice, the calculation of $\HH_\omega(z,w)$ is sufficiently intricate that we have not verified it.

\subsection{Jucys-Murphy approach?}

The formulas for character values on semisimple reflections and transvections 
in Lemma~\ref{normalized-characters-on-semisimple-reflections}(ii)
and Corollary~\ref{normalized-characters-on-transvections}
are remarkably simple compared to the machinery used in their proofs.
Can they be developed using a $q$-analogue
of the Okounkov-Vershik approach \cite{CST, VershikOkounkov} to the ordinary character
theory 
of $\Symm_n$, using the commuting family of {\it Jucys-Murphy elements} 
\cite{Jucys, Murphy}, 
a multiplicity-free branching rule, a Gelfand-Zetlin basis, etc.?  
Such a theory might even allow one to prove $q$-analogues for more
general generating function results, such as one finds in Jackson \cite{Jackson}.

A feature of the $\Symm_n$ theory (see Chapuy and Stump \cite[\S 5]{ChapuyStump}, Jucys
\cite[\S 4]{Jucys}) 
is that any symmetric function $f(x_1,\ldots,x_n)$ when evaluated on the 
Jucys-Murphy elements $J_1,\ldots,J_n$ acts as a scalar 
in each $\Symm_n$-irreducible $V^\lambda$,
and this scalar is $f(c(a_1),\ldots,c(a_n))$ where $c(a_i)$ are the contents
of the cells of $\lambda$.  Taking $f=\sum_{i=1}^n x_i$ gives a 
quick calculation of the irreducible characters evaluated on $\sum_{i=1}^n J_i$, the sum of all
transpositions.  
Lemma~\ref{normalized-characters-on-semisimple-reflections}(ii)
and Corollary~\ref{normalized-characters-on-transvections} seem suggestive of 
a $q$-analogue for this assertion. 

It is at least clear how one might define relevant
Jucys-Murphy elements.

\begin{definition}
For $1 \leq m < n$ embed $GL_m \subset GL_n$ as 
the subgroup fixing $e_{m+1},\ldots,e_n$.  Then 
for each $\alpha \in \FF_q^\times$, let $J_m^{\alpha}:=\sum_t t$ be the sum inside
the group algebra $\CC GL_n$ over this subset of reflections:
\begin{equation}
\label{fine-Jucys-Murphy-summation-set}
\{ \text{reflections }t \in GL_{m}: \det(t)=\alpha\text{ and }t \not\in GL_{m-1}\}.
\end{equation}
\end{definition}

\begin{proposition}
The elements $\{ J_m^{\alpha} \}$ for $m=1,2,\cdots,n$ and  $\alpha $ in $\FF_q^\times$
pairwise commute.
\end{proposition}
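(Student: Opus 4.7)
The plan is to realize each Jucys--Murphy element $J_m^\alpha$ as a difference of class sums in the group algebras of nested general linear subgroups, and then exploit the centrality of those class sums. Concretely, define
\[
S_m^\alpha := \sum_{\substack{t \in GL_m: \\ t \text{ reflection}, \det(t)=\alpha}} t \quad\in\quad \CC GL_m,
\]
with the convention $S_0^\alpha := 0$. The first step is to observe that $S_m^\alpha$ is central in $\CC GL_m$: for $m \geq 2$ the reflections in $GL_m$ of a given determinant $\alpha \in \FF_q^\times$ form a single conjugacy class (a conjugacy class of transvections if $\alpha = 1$, of semisimple reflections if $\alpha \neq 1$), and for $m = 1$ the element $S_1^\alpha$ is either $0$ or a scalar, so it is central trivially.

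Next I would check the identity
\[
J_m^\alpha = S_m^\alpha - S_{m-1}^\alpha,
\]
which is immediate from the definitions: a reflection $t \in GL_m$ with $\det(t) = \alpha$ lies in $GL_{m-1}$ exactly when $t$ additionally fixes $e_m$, so the two sides count the same reflections. This decomposition is the key structural observation, and the rest is formal.

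From this decomposition I would deduce the central lemma: $J_m^\alpha$ commutes with every element of $\CC GL_{m-1}$. Indeed, $S_m^\alpha$ is central in $\CC GL_m$ hence commutes with the subalgebra $\CC GL_{m-1}$, while $S_{m-1}^\alpha$ is already central in $\CC GL_{m-1}$. Now for $m' < m$ the element $J_{m'}^{\alpha'}$ lies in $\CC GL_{m'} \subseteq \CC GL_{m-1}$, so it commutes with $J_m^\alpha$. For the diagonal case $m' = m$, expand
\[
[J_m^\alpha, J_m^{\alpha'}] = [S_m^\alpha - S_{m-1}^\alpha, \, S_m^{\alpha'} - S_{m-1}^{\alpha'}],
\]
and observe that each of the four pairs $(S_m^\alpha, S_m^{\alpha'})$, $(S_m^\alpha, S_{m-1}^{\alpha'})$, $(S_{m-1}^\alpha, S_m^{\alpha'})$, $(S_{m-1}^\alpha, S_{m-1}^{\alpha'})$ commutes: in each case, one element is central in the smallest $GL_k$ containing both.

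There is no real obstacle here; the only genuine content is the centrality of $S_m^\alpha$, which rests on the classification of reflection conjugacy classes in $GL_m(\FF_q)$ recalled in the introduction (transvections form one class, semisimple reflections are classified by determinant). The argument is the direct $q$-analogue of the standard proof that the classical Jucys--Murphy elements $J_m = \sum_{i < m}(i,m)$ in $\CC \Symm_n$ commute, replacing the class sum of all transpositions in $\Symm_m$ by $S_m^\alpha$.
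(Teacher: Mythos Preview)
Your proof is correct and takes essentially the same approach as the paper: both rest on the centrality of the class sums $S_m^\alpha$ (which the paper calls $z_{m,\alpha}$) and the telescoping relation $J_m^\alpha = S_m^\alpha - S_{m-1}^\alpha$ (equivalently, the paper's $z_{n,\alpha} = \sum_{i=1}^n J_i^\alpha$). The only cosmetic difference is that the paper handles the off-diagonal case $m' < m$ by a direct observation that conjugation by $g \in GL_{m-1}$ permutes the summation set defining $J_m^\alpha$, whereas you deduce the same fact from the decomposition; your packaging is slightly more uniform, but the content is identical.
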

\begin{proof}
Note that $J_n^{\alpha}$ commutes with any $g$ in $GL_{n-1}$,
or equivalently, $gJ_n^{\alpha}g^{-1}=J_n^{\alpha}$, since conjugation by $g$
induces a permutation of the set in \eqref{fine-Jucys-Murphy-summation-set}.
This shows that $J_n^{\alpha}, J_m^{\beta}$ commute
when $n \neq m$, since if one assumes $m<n$, then every term of 
$J_m^\beta$ lies in $GL_{n-1}$.  To see that $[J_n^{\alpha}, J_n^{\beta}]=0$,
note that our conjugacy sums $z_\alpha=:z_{n,\alpha}$ from 
Definition~\ref{reflection-conjugacy-class-sum-definition} lie the center of $\CC GL_n$
and can be expressed as 
$
z_{n,\alpha}=\sum_{i=1}^n J_i^{\alpha}.
$
Therefore 
$$
0=[z_{n,\alpha},J_n^\beta]
=\left[ \sum_{i=1}^n J_i^{\alpha}, J_n^{\beta} \right]
=[J_n^{\alpha}, J_n^{\beta}] +
   \left[ \sum_{i<n} J_i^{\alpha}, J_n^{\beta} \right] 
 = [J_n^{\alpha}, J_n^{\beta}]
$$ 
using bilinearity of commutators, and the commutativity of $J_i^{\alpha}, J_n^{\beta}$
for $i <n$.
\end{proof}

\section*{Acknowledgements}
The authors thank A. Ram and P. Diaconis for pointing them to this work of
Hildebrand \cite{Hildebrand} used in Section~\ref{transvection-character-section}.  They also thank A. Henderson and E. Letellier for pointing them to \cite{HauselLetellierRodriguez, Letellier}.

\end{document}